\newtheorem{thm}{Theorem}[section]
\newtheorem{lemma}[thm]{Lemma}
\newtheorem{corollary}[thm]{Corollary}
\theoremstyle{definition}
\newtheorem{defn}[thm]{Definition}
\newtheorem{example}{Example}[section]
\numberwithin{equation}{section}
\def\BMT{\lower0.9em\hbox{\includegraphics{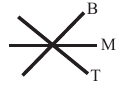}}}
\def\TMB{\lower0.9em\hbox{\includegraphics{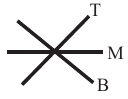}}}
\def\Msplit{\lower0.8em\hbox{\includegraphics{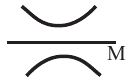}}}
\def\Tsplit{\lower1.0em\hbox{\includegraphics{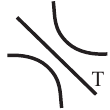}}}
\def\Bsplit{\lower1.0em\hbox{\includegraphics{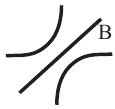}}}
\def\Ufirstsplit{\lower0.7em\hbox{\includegraphics{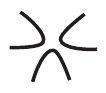}}}
\def\Usecondsplit{\lower0.7em\hbox{\includegraphics{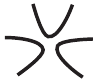}}}
\begin{document}
\title[Triple Crossing Number of Knots and Links]
{Triple Crossing Number of Knots and Links}

\date{\today}
\author[Colin Adams]{Colin Adams}
\address{Department of Mathematics and Statistics, Williams College, Williamstown, MA 01267}
\email{Colin.C.Adams@williams.edu}

\begin{abstract} A triple crossing is a crossing in a projection of a knot or link that has three strands of the knot passing straight through it.  A triple crossing projection is a projection such that all of the crossings are triple crossings. We prove that every knot and link has a triple crossing projection and then investigate $c_3(K)$, which is the minimum number of triple crossings in a projection of $K$.  We obtain upper and lower bounds on $c_3(K)$ in terms of the traditional crossing number and show that both are realized. We also relate triple crossing number to the span of the bracket polynomial and use this to determine $c_3(K)$ for a variety of knots and links. We then use $c_3(K)$ to obtain bounds on the volume of a hyperbolic knot or link. We also consider extensions to $c_n(K)$.
\end{abstract}
\maketitle

\section{Introduction}\label{S:intro} Since the study of knots began, projections of knots with finitely many transverse double points, called crossings, have played a fundamental role. The crossing number, denoted $c(K)$, is the least number of such crossings in any projection of the given knot. In this paper, we consider crossings where three strands of the knot cross, a top strand, a middle strand and a bottom strand. (See Figure \ref{triple}). The strand labelled T is on the top, M is in the middle and B is on the bottom.
We define a projection to be a triple crossing projection if the only singular points in the projection are triple crossings. See Figure \ref{trefoil} to see triple-crossing projections of the trefoil and figure-eight knot. (In fact, these are the only two nontrivial knots with projections consisting of only two triple crossings.)

\begin{figure}[h]
\begin{center}
\includegraphics[scale=0.7]{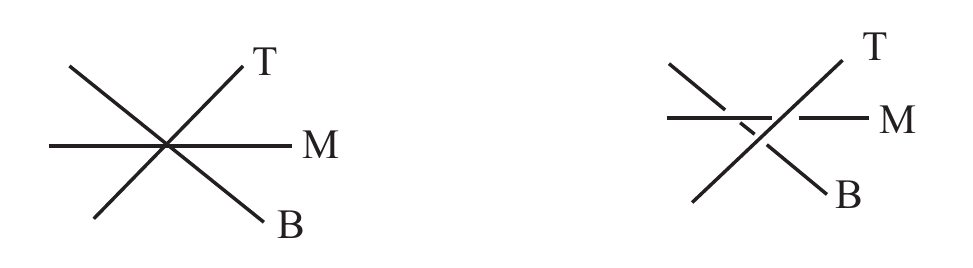}
\caption{A triple crossing and a view slightly to the side of that same crossing so we can see how the single triple crossing resolves into three double crossings.}
\label{triple}
\end{center}
\end{figure}

\begin{figure}[h]
\begin{center}
\includegraphics[scale=0.7]{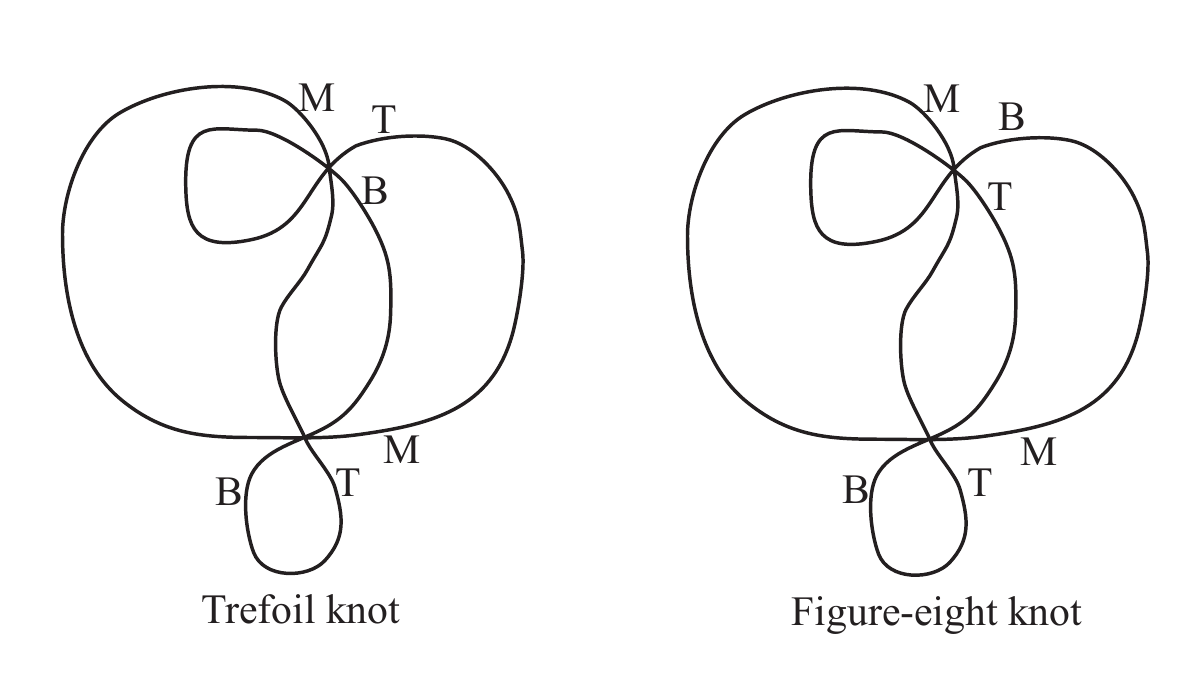}
\caption{Triple crossing projections.}
\label{trefoil}
\end{center}
\end{figure}

     In Section 2, we prove that every knot has a triple-crossing projection. Hence, we can define $c_3(K)$ to be the minimum triple crossing number for any projection of the knot $K$. We also prove that $\frac{c(K)}{3} \le c_3(K) \le c(K) -1$. We show that there exist links that realize the lower bound and that for any knot or link other than a 2-braid knot, $c_3(K) \le c(K) -2$.

     In Section 3, we obtain skein relations for the bracket polynomial applied to a triple crossing projection. We use these to show that the span of the bracket polynomial must always be at most $8c_3(K)$. This can then be used to determine exactly the triple crossing number for a substantial subset of the alternating knots and links.
     
     In Section 4, we consider generalizations to projections with only $n$-crossings for a fixed value of $n \ge 2$. We show that for every $n \ge 2$, there exists an $n$-crossing projection. Hence we can define $c_n(K)$ to be the minimum number of $n$-crossings in an $n$-crossing projection of $K$. 

     In Section 5, we consider applications to hyperbolic knots. In particular, we prove that if $K$ is hyperbolic, then $vol(S^3 -L) \leq 2(3.6638\dots) (c_3(L)-2) + 4(1.01494\dots)$.
     
Triple crossing projections were first considered in \cite{TT}, where the authors investigated triple crossing projections of graphs. They showed that most multipartite graphs have no triple crossing projection. Their own work was motivated by \cite {PT}, where so-called degenerate drawings of graphs were considered, allowing $n$-crossings of any $n$ in the same drawing of the graph.

I would like to thank T. Crawford, B. DeMeo, M. Landry, A. Lin, M. Montee, S. Park, S. Venkatesh and F. Yhee for very helpful conversations. In a subsequent paper with these co-authors (\cite{Ad}), we investigate projections of knots with just a single multi-crossing. We show that every knot and link has such a projection.
    
\section{Existence of triple crossing projections}

We begin by noting a simple procedure for turning traditional crossings, from now on called double crossings, into triple crossings. Let $P$ be a regular projection of a knot and let $C$ be a topological circle in the projection plane that intersects the projection $P$ only at crossings, and when it does intersect a crossing, it passes out the other side of the crossing. That is to say, it has two strands of the knot coming out of the crossing to either side of it. (See Figure \ref{folding}(a)). We call such a circle a {\em crossing covering circle}.Then, as in Figure \ref{folding}(b), we can take a strand of the knot coming out of one of the crossings, and stretch it around the circle, laying it on top of each of the crossings intersected by the circle. Note that we create a monogon region at the initial end of the newly stretched strand. If we choose the strand that we stretch to be the overstrand at the initial crossing, then the two strands of the mongon both pass over the third strand, so that monogon allows us to isotope the knot to eliminate one of the triple crossings that result as in Figure \ref{folding}(c). Hence,  if the circle initially intersected $n$ double crossings, we generate $n-1$ triple crossings when we perform this operation. We call this operation a {\em folding}.

\begin{figure}[h]
\begin{center}
\includegraphics[scale=0.7]{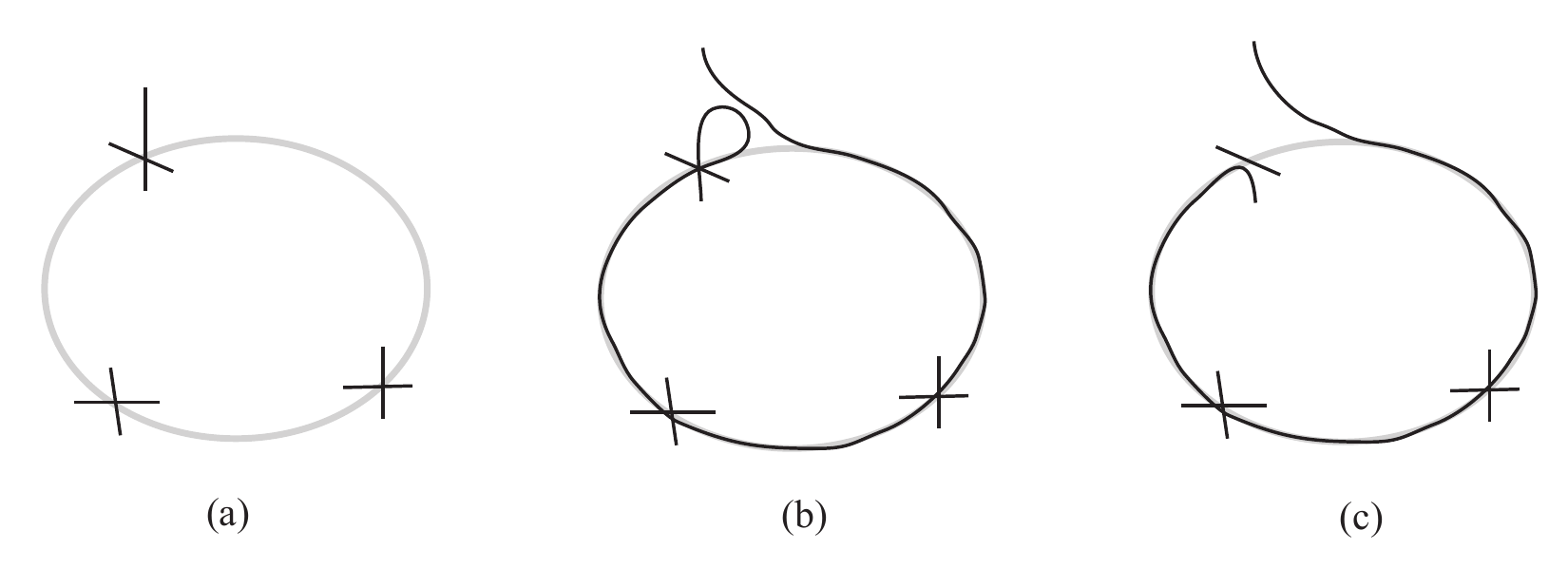}
\caption{Using a circle to turn double crossings into triple crossings via a folding}
\label{folding}
\end{center}
\end{figure}

\begin{defn} A collection of disjoint circles $\mathcal{C}$ in the projection plane is said  to be a {\em crossing covering collection} for a projection $P$ if the circles intersect the crossings as described above and every crossing intersects exactly one of the circles.
\end{defn}

In 1923, Alexander \cite{Alex} gave the first proof that every knot and link can be put in braid form. If a knot is in braid form with braid $\beta$, define $c_{\beta}$ to be the number of crossings in $\beta$ and $s_{\beta}$ to be the number of strings in $\beta$.

\begin{thm}\label{braidtheorem} Every knot and link $L$  has a triple crossing projection with no more than $c_{\beta} - s_{\beta} + 1$ triple crossings, where $c_{\beta} - s_{\beta} + 1$ can then be minimized over all braid representations $\beta$ of $L$.
\end{thm}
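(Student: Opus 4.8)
The plan is to start from a closed-braid diagram provided by Alexander's theorem and to apply the folding operation simultaneously along a natural family of $s_\beta-1$ crossing covering circles. Fix a braid $\beta$ on $s=s_\beta$ strands with $c=c_\beta$ crossings whose closure is $L$, and represent $\widehat\beta$ by its standard diagram $P$ in an annulus $A$: the $s$ strands wind once around $A$, the crossings occur at distinct angular positions, and between crossings the strands run along $s$ fixed circles concentric with $\partial A$ (call these the tracks), each crossing swapping two strands in adjacent tracks. For $i=1,\dots,s-1$, let $C_i\subset A$ be a circle concentric with $\partial A$ lying between the $i$-th and $(i+1)$-st tracks. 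Away from the crossings, $C_i$ is disjoint from $P$; at a crossing lying between tracks $i$ and $i+1$ (that is, at an occurrence of $\sigma_i^{\pm1}$) it runs straight through the crossing point with exactly two strand-ends on either side of it, so it meets that crossing as a crossing covering circle must. No two of the $C_i$ intersect, and no crossing meets more than one of them; hence, provided every generator $\sigma_i$ occurs in $\beta$, the family $\mathcal C=\{C_1,\dots,C_{s-1}\}$ is a crossing covering collection for $P$. Write $n_i$ for the number of crossings met by $C_i$, so $\sum_{i=1}^{s-1}n_i=c$.

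Next, perform a folding along each $C_i$. By the construction preceding the statement, a folding along a circle that meets $n_i$ double crossings produces $n_i-1$ triple crossings (the $n_i$ crossings become triple crossings, and one of them is then removed using the monogon at the initial end of the stretched strand, which is chosen to be an overstrand there). The strand stretched along $C_i$ stays in the annular band between tracks $i$ and $i+1$, and the monogon-removing isotopy is supported near the initial $\sigma_i$-crossing, so the foldings along distinct circles occupy essentially disjoint bands and do not disturb one another. After performing all of them, every crossing of the diagram is a triple crossing, and the resulting triple crossing projection of $L$ has $\sum_{i=1}^{s-1}(n_i-1)=c-(s-1)=c_\beta-s_\beta+1$ triple crossings. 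Since this bound holds for every braid representative, we may finally minimize $c_\beta-s_\beta+1$ over all braid representations $\beta$ of $L$.

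The step that requires care is the standing hypothesis that each generator $\sigma_i$ occurs in $\beta$ --- equivalently, that every $C_i$ meets at least one crossing --- together with the borderline case in which some $C_i$ meets exactly one crossing (there the folding would merely recreate the original double crossing rather than a triple one). If $\sigma_i$ is absent, then $C_i$ misses $P$ altogether and $L$ is split, which one handles by passing to the split summands. If $\sigma_i$ occurs exactly once, then $C_i$ exhibits the corresponding crossing as nugatory; removing it by a Reidemeister~I move and rewriting the result as a closed braid gives a representative with one fewer strand and one fewer crossing, and hence the same value of $c_\beta-s_\beta+1$, so one may assume by induction that every generator occurs at least twice. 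I expect this degenerate-case bookkeeping, together with the careful verification that the simultaneous foldings truly do not interfere, to be the only real obstacles; the local claim that $C_i$ passes through each between-tracks-$i$-and-$i{+}1$ crossing as a bona fide crossing covering circle is immediate from the standard picture of a braid generator.
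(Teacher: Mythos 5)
Your proof is correct and follows essentially the same route as the paper: take the $s_\beta-1$ concentric circles between the strings of the closed braid as a crossing covering collection and fold along each, losing one crossing per circle to obtain $c_\beta-s_\beta+1$ triple crossings. The only difference is that you explicitly handle the degenerate cases (a generator $\sigma_i$ absent or occurring exactly once), which the paper passes over in silence; this is added care rather than a different argument.
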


\begin{proof} If we put $L$ in braid form, we can easily find a crossing covering collection $\mathcal{C}$. Just take the concentric circles between the strings. Outside the braided portion, they do not intersect the knot, and inside the braided portion, they intersect the knot exactly on the crossings. The number of these circles is $s_{\beta} - 1$. Applying a folding along each of these circles yields a projection with $c_{\beta} - s_{\beta} + 1$ crossings.
\end{proof}

Although this theorem guarantees the existence of a triple crossing projection, it can be the case that the minimum number of crossings in a braid that represent $L$ can be substantially large than the crossing number of $L$. However, one can obtain a quadratic bound in crossing number for the minimal braid length, which yields a bound on triple crossing number that is quadratic in the double crossing number of the knot or link. However, a linear bound would be preferable. Hence, we prove the following theorem.

\begin{thm}Every knot or link $L$ other than a 2-braid knot has a triple crossing projection with no more than $c(K)-2$ triple crossings, and a 2-braid knot has a triple crossing projection with no more than $c(K) -1$ triple crossings.
\end{thm}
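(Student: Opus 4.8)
The plan is to reduce everything to the folding operation. Observe first that if $\mathcal{C}$ is a crossing covering collection of a projection with $c$ double crossings whose $k$ (pairwise disjoint) circles meet $n_1,\dots,n_k$ crossings, then folding along all of the circles at once — possible since one can take disjoint neighborhoods of the circles — produces a triple crossing projection with $\sum_j (n_j-1)=c-k$ triple crossings. So I would reduce the theorem to showing that a minimal diagram of a $2$-braid knot admits a crossing covering collection consisting of a single circle (yielding $c(K)-1$), and that a minimal diagram of every other knot or link admits one with at least two circles (yielding $c(K)-2$). The $2$-braid case is immediate: the minimal diagram is the closure of some $\sigma_1^{n}$ with $n=c(K)$, and the circle between the two strands meets all $n$ crossings. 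A split link is handled by combining the bounds for its pieces. So the real task becomes: given a connected minimal diagram $D$ of a link $L$ that is not a $2$-braid knot, produce a crossing covering collection of $D$ with at least two circles.

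To do this I would reformulate via the checkerboard coloring. Color $D$ and let $G_B, G_W$ be the two Tait graphs (a vertex per black, resp.\ white, face; one edge through each crossing joining the two like-colored faces meeting there), so $G_W=G_B^{*}$. At a crossing a crossing covering circle passes from a face to the \emph{opposite} face, and opposite faces have the same color; hence each such circle lies entirely among faces of one color and is an embedded closed curve tracing a closed walk in $G_B$ or in $G_W$. Conversely, given a partition of the crossings into $E_B$ and its complement $E_W$ such that every vertex of $G_B$ meets an even number of edges of $E_B$ and every vertex of $G_W$ an even number of edges of $E_W$, I can realize $E_B$ and $E_W$ as disjoint unions of embedded circles by pairing, at each vertex, the incident edge ends consecutively in the rotation order and joining them by disjoint arcs inside the corresponding face; this gives a crossing covering collection whose number of circles I want to be at least $2$.

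Then I would argue as follows. If some valid partition has both $E_B$ and $E_W$ nonempty, it already produces a black circle and a white circle, and we are done. Otherwise some valid partition has one part, say $E_W$, empty; then $E_B$ is all of the crossings and $G_B$ has all even degrees. If $G_B$ has a vertex of degree $\ge 4$ (a black face with $\ge 4$ sides), then changing the pairing at that vertex on a single pair of edge ends changes the number of circles by exactly one, and since that number is always at least one we can arrange for it to be at least two. The only remaining possibility is that $G_B$ is $2$-regular and connected, hence a single cycle, so every black face is a bigon; using that a minimal diagram is reduced (no monogon faces), one checks this forces $D$ to be precisely the closed diagram of $\sigma_1^{n}$, i.e.\ $L$ is a $2$-braid knot — the excluded case.

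The hard part will be showing that a minimal diagram actually admits \emph{some} valid partition. Over $\mathbb{F}_2$ this is equivalent to the all-ones vector on the crossing set lying in $\mathcal{Z}(G_B)+\mathcal{Z}(G_W)$, the sum of the two cycle spaces — equivalently, to that vector being orthogonal to every bicycle of $G_B$. I would need either to prove this always holds, or to show that whenever it fails the diagram can be replaced, without increasing the crossing number (for instance by a flype or by excising an obstructing configuration), by one for which it does hold. Establishing this existence statement, verifying the structural claim that an all-bigon collection of black faces forces the standard $2$-braid diagram, and checking that the constructed circles really are embedded and mutually disjoint, are the substantive steps; the remainder is bookkeeping with the folding operation.
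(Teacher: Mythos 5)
Your reduction to the existence of a crossing covering collection with the right number of circles is exactly the right target, and your Tait-graph reformulation is a legitimate (and genuinely different) route from the paper's, which instead proves that \emph{every} projection admits a crossing covering collection by checking that each of the three Reidemeister moves preserves the existence of such a collection and that the standard unknot/unlink diagram has one. But as written your proposal has a genuine gap precisely at the step you yourself flag as ``the hard part'': you never prove that a valid partition of the crossings into $E_B$ and $E_W$ exists, i.e.\ that the all-ones vector lies in $\mathcal{Z}(G_B)+\mathcal{Z}(G_W)$. That existence statement \emph{is} the content of the theorem's main lemma, so leaving it as ``I would need either to prove this always holds, or to show that whenever it fails the diagram can be replaced'' means the proof is not complete. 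The gap is closable within your own framework: over $\mathbb{F}_2$ the cycle space of $G_B$ and the cycle space of $G_W$ (the cut space of $G_B$) are orthogonal complements, so $\mathcal{Z}(G_B)+\mathcal{Z}(G_W)=\mathcal{B}^{\perp}$ where $\mathcal{B}$ is the bicycle space; any bicycle lies in both a space and its orthogonal complement, hence is self-orthogonal, hence has even support, hence is orthogonal to the all-ones vector. Had you supplied that three-line argument, your route would arguably be cleaner than the paper's, since it applies directly to any diagram without invoking unknotting by crossing changes and Reidemeister invariance.

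Two secondary steps are also asserted rather than proved, and one of them is false as stated. The claim that re-pairing a single pair of edge-ends at a vertex of degree $\ge 4$ ``changes the number of circles by exactly one'' is not correct in general: replacing the arcs $(1,2),(3,4)$ by $(1,4),(2,3)$ is a double switch whose effect on the number of circles depends on how the four edge-ends are distributed among the existing circles, and it can leave the count unchanged. You would need a different argument here (for instance, tracking what happens when all black faces have been used and at least one has degree $\ge 4$, or arguing directly that a connected diagram whose crossings all lie on one covering circle has every complementary region met by that circle a bigon). Relatedly, the structural endgame --- that an all-bigon, $2$-regular, connected $G_B$ in a reduced diagram forces the standard closed $\sigma_1^{n}$ diagram --- is plausible and is essentially the same endgame the paper uses (its Figure~\ref{2braid} argument), but you give no proof of it. These are fixable, but together with the missing existence argument they leave the proposal short of a complete proof.
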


\begin{proof} We will show that any projection of $L$ has a crossing covering collection $\mathcal{C}$. Let $P$ be any projection. The existence of a crossing covering collection is independent of the choice of overstrand at each crossing. Every projection can be turned into a projection of the trivial knot or link by changing the crossings. Moreover, a projection of the trivial knot or link must be equivalent to the standard projection of a trivial knot or link via the three Reidemeister moves. The standard projection of the trivial knot or trivial link does have a crossing covering collection, but it just happens to be the empty collection. So it is enough to show that the three Reidemeister moves preserve the existence of a crossing covering collection to show that $P$ possesses a crossing covering collection. We consider each of the Reidemeister moves in turn.

In Figure \ref{ReidemeisterI}, we see that a Type I move preserves the existence of a crossing covering collection. In Figure \ref{ReidemeisterII},we see that a Type II move preserves the existence of a covering crossing projection. In Figure \ref{ReidemeisterII}(a), we see that having no circles or one circle of the crossing covering collection pass between the two strands where a Type II Reidemeister move occurs preserves the crossing covering collection. In Figure \ref{ReidemeisterII}(b), we assume that an odd number of circles pass between the two strands. Before performing the Type II move, we perform surgery on the covering circles in order that only one covering circle passes between the two strands and then we perform the Type II move. In Figure\ref{ReidemeisterII}(c), we assume that an even number of circles pass between the two strands. Before performing the Type II move, we perform surgery on the covering circles in order that no covering circle passes between the two strands and then we perform the Type II move.  In Figure \ref{ReidemeisterIII},we see that a Type III move also preserves the existence of a covering crossing projection. Hence, $P$ possesses a crossing covering collection.

\begin{figure}[h]
\begin{center}
\includegraphics[scale=0.7]{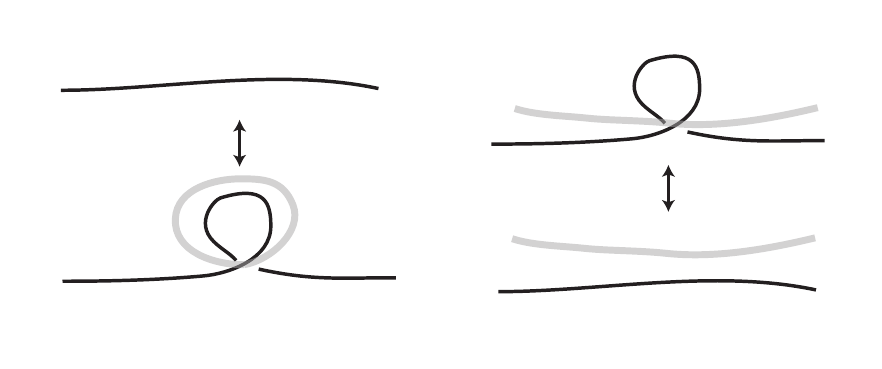}
\caption{Type I moves preserve a crossing covering collection(which appears in gray.)}
\label{ReidemeisterI}
\end{center}
\end{figure}

\begin{figure}[h]
\begin{center}
\includegraphics[scale=0.7]{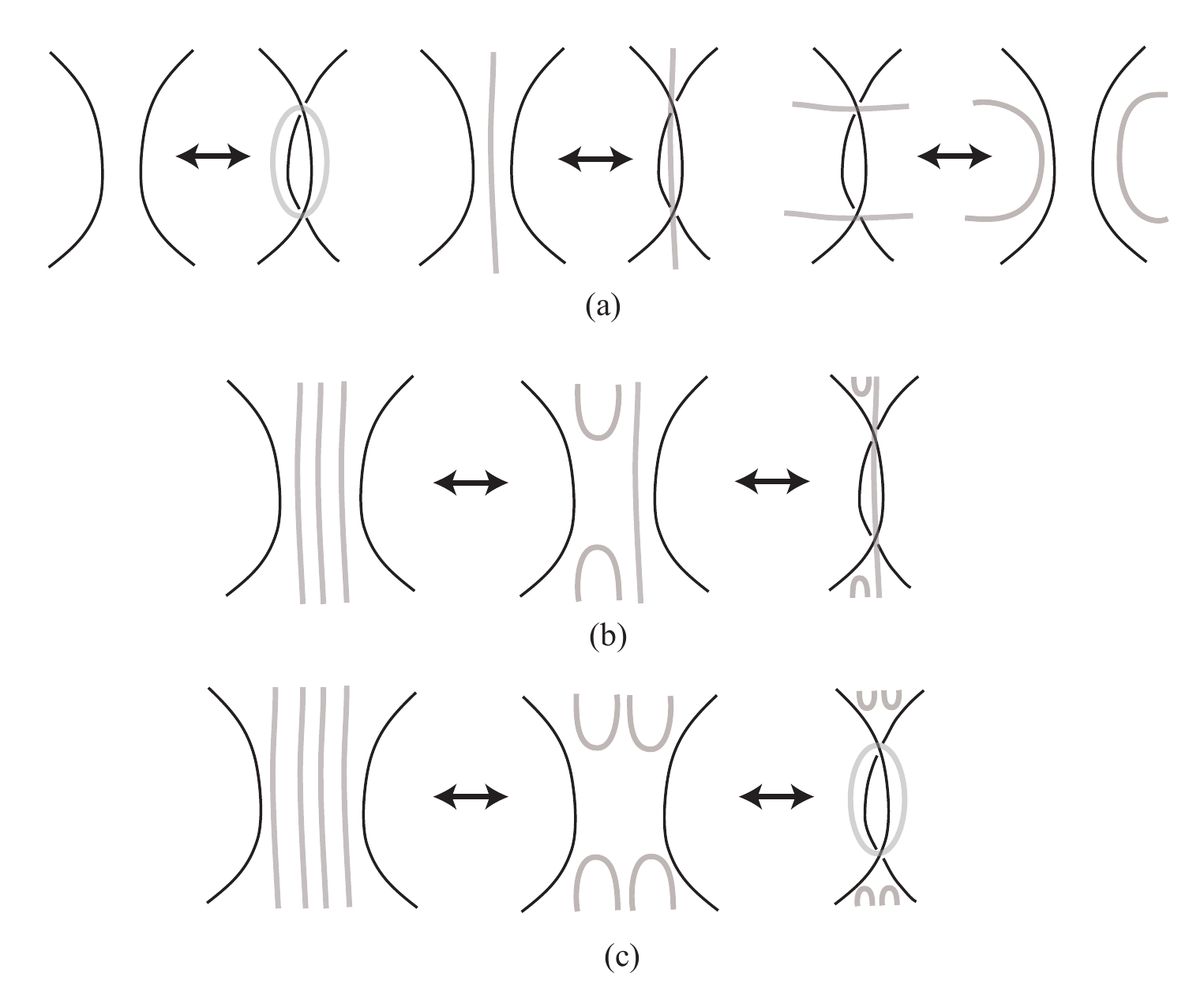}
\caption{Type II moves preserve a crossing covering collection.}
\label{ReidemeisterII}
\end{center}
\end{figure}

\begin{figure}[h]
\begin{center}
\includegraphics[scale=0.7]{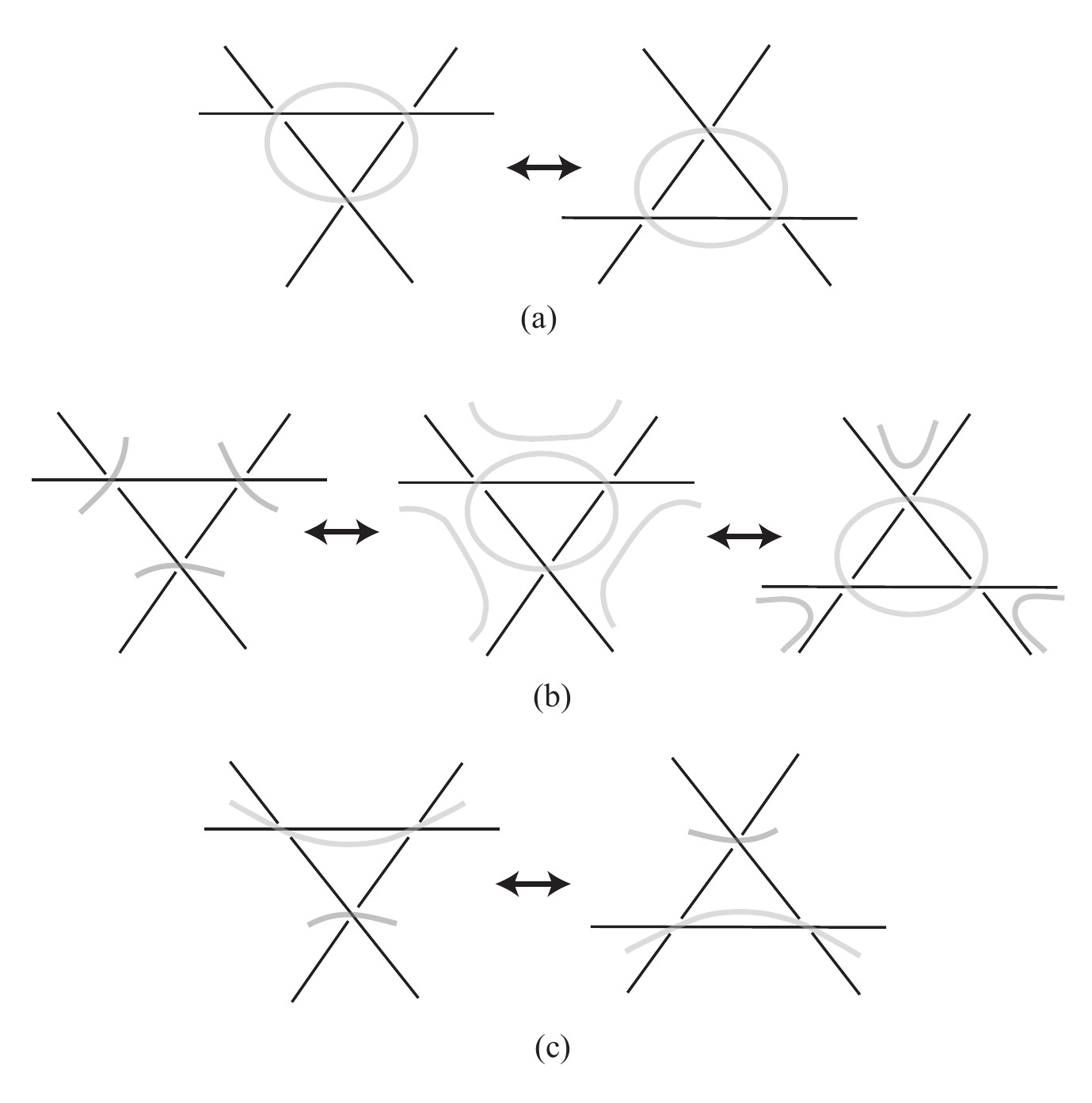}
\caption{Type III moves preserve a crossing covering collection(which appears in gray.)}
\label{ReidemeisterIII}
\end{center}
\end{figure}

We assume that all circles in a crossing covering collection intersect at least one crossing, as if not, the circle is superfluous.  As we already mentioned, with each circle in a crossing covering collection, when we fold, we lower the number of triple crossings by 1. So if there are at least two circles in our collection of crossing covering circles, the resultant number of triple crossings after folding will be at most $c(L)-2$. So suppose that there is only one circle  in the crossing covering collection and there does not exist a crossing covering collection of $P$ with more circles. So all crossings appear on $C$ and the crossings must be connected by the knot or link to the inside and outside of $C$ as appear in Figure \ref{2braid} since otherwise, we could do surgery on our single circle and split it into more than one circle. Hence, the only knots and links that have only one circle in their maximal crossing covering collection are the 2-braid knots. Note that a 2-braid link has a larger maximal crossing covering collection obtained by taking a circle around every other bigon in the projection. 
 \end{proof}

\begin{figure}[h]
\begin{center}
\includegraphics[scale=0.4]{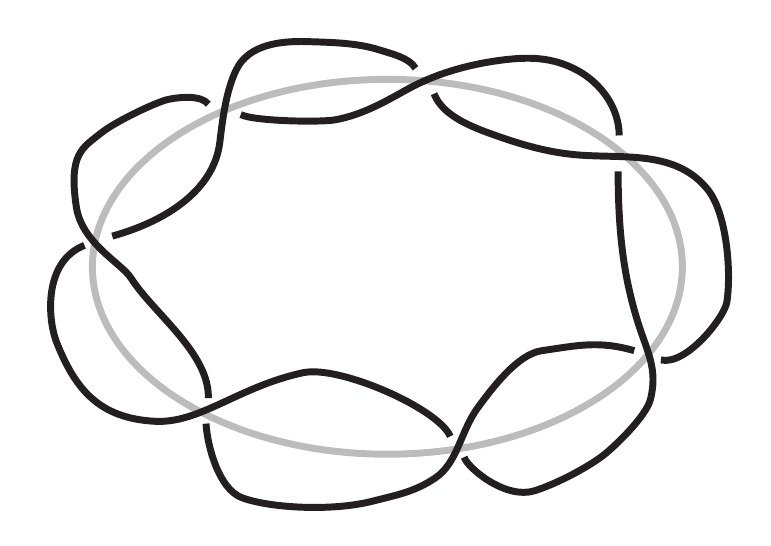}
\caption{A projection with at most one circle in its covering crossing collection.}
\label{2braid}
\end{center}
\end{figure}

One would like to prove that in fact, if $K$ is a 2-braid knot, then $c_3(K) = c(K) -1$. Also note that the trefoil and figure-eight knot realize the upper bounds given by the theorem.

 It is immediate that $\frac{c(K)}{3} \le c_3(K)$, since any triple crossing can be resolved into three double crossings. We demonstrate that there are links that realize this lower bound.
 
 \begin{figure}[h]
\begin{center}
\includegraphics[scale=0.7]{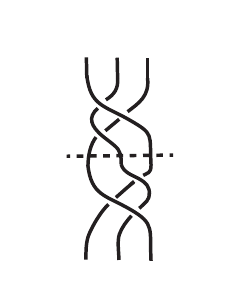}
\caption{Two triple crossings generate a full twist in the braid.}
\label{twist}
\end{center}
\end{figure}

\begin{example} \label{string} Take a 3-string braid and note that two particular triple crossings as in Figure \ref{twist}  yield one full twist in the braid. Hence by applying $m$ such pairs of triple crossings and closing the braid, we obtain a $(3m, 3)$-torus link with three components, each of which appears as an $(m, 1)$-curve on the torus. The crossing number of a $(3m,3)$-torus link is known to be $6m$, thereby yielding infinitely many examples of links that satisfy   $\frac{c(K)}{3} = c_3(K)$.
\end{example}

One might be tempted to manipulate such an example in order to obtain a knot that realizes the lower bound rather than a link, but note that a triple crossing braid will always be a link. Each triple crossing will impact three adjacent strings, and will switch the two outer ones while fixing the middle one. Hence, if the strings are labelled $1, 2, \dots, n$ from left to right, a triple crossing braid can never change the parity of a string. There will always be at least two components in the resultant link.

\begin{example} Consider the link that appear in Figure \ref{cubelink}. Note that each component has linking number 1 with four other components. Hence, as there must be two crossings between any pair of linked components, this link must have a crossing number of at least 24. As this is the number of crossings in this projection, we obtain a crossing number of 24. However, this projection also demonstrates that this link has a triple crossing projection with  8 triple crossings, thereby yielding another example with $\frac{c(K)}{3} = c_3(K)$.

\begin{figure}[h]
\begin{center}
\includegraphics[scale=0.7]{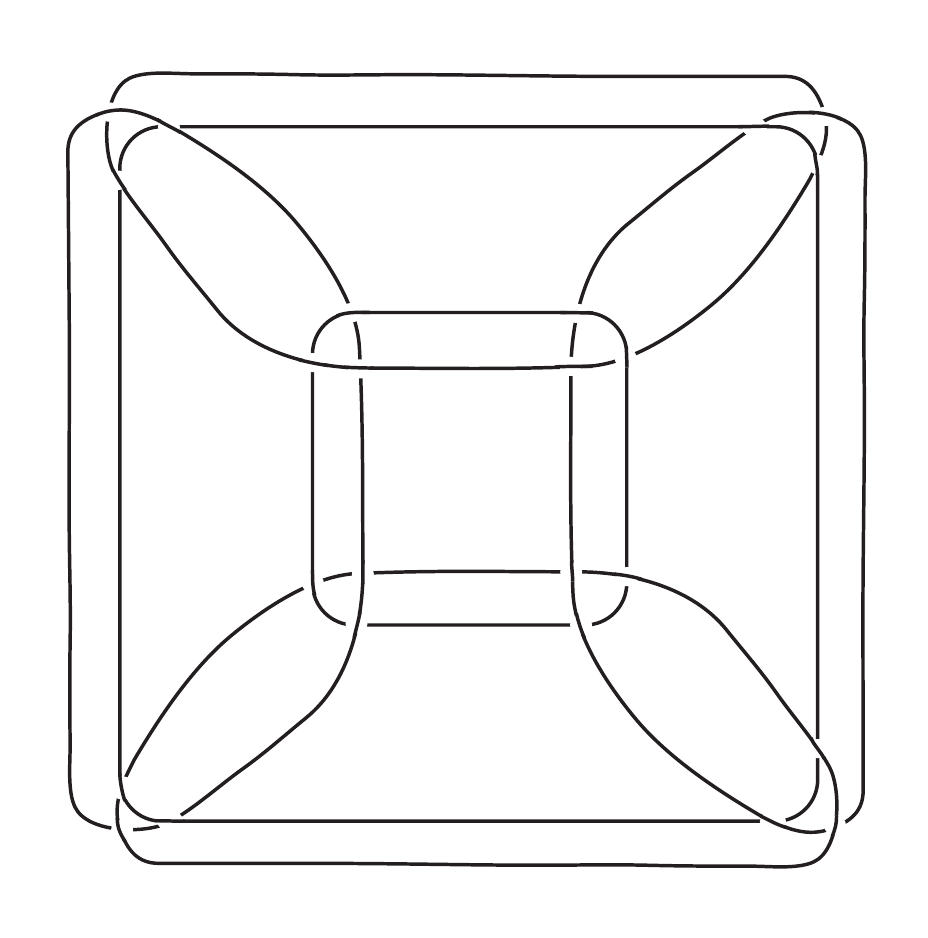}
\caption{A link that satisfies $\frac{c(K)}{3} = c_3(K)$.}
\label{cubelink}
\end{center}
\end{figure}

A second link, based on a dodecahedron, can be similarly constructed. Each of the twelve components links with five others, which by the same type of argument, leads to $\frac{c(K)}{3} = c_3(K)$ once again. Additional examples can be constructed from the 1-skeleta of other trivalent planar graphs.
\end{example}

\section{Triple crossing number and the bracket polynomial}

In this section, we consider what the bracket polynomial can tell us about triple crossing number. Each triple crossing in a triple crossing projection will either be a BMT crossing or a TMB crossing when the labels are read clockwise. When we resolve the single triple crossing into three double crossings and then split those crossings each in the two ways possible, we obtain eight possibilities. These subdivide into five equivalence classes, which we describe as follows. The T-split, B-split and M-split are the states obtained by splitting open the triple crossing so the T, M or B strand goes straight through respectively and the remaining strands perform a U-turn to either side of that strand. The remaining two splits are called U-splits, and appear as in the statement of the following theorem.

\begin{thm} \label{skein}\Large{$< \BMT> = A^3<\Msplit>+ A(< \Ufirstsplit> +<\Usecondsplit >) + \\A^{-1} (<\Bsplit> + <\Tsplit>)$
\vspace{.5in} \\
                              $ <\TMB> = A^{-3}<\Msplit > + A^{-1} (<\Ufirstsplit> +<\Usecondsplit>) + \\A (<\Bsplit> + <\Tsplit >)$}

\end{thm}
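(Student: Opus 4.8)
The plan is to derive both identities from the ordinary Kauffman bracket skein relation $\langle L\rangle = A\langle L_0\rangle + A^{-1}\langle L_\infty\rangle$ applied to the three ordinary crossings into which a triple crossing resolves. Fix, as in Figure~\ref{triple}, such a resolution: the three double crossings sit at the vertices of a small triangle whose three edges are sub-arcs of the three strands, with the over/under data inherited from the triple crossing, so at two of the three vertices the $T$-strand is the overstrand and at the third the $M$-strand is. (Any two resolutions of the same triple crossing differ by a Type~III Reidemeister move, hence have the same bracket, so the choice is immaterial.) Applying the skein relation at each of the three crossings expands $\langle\BMT\rangle$ as a sum of $2^3 = 8$ states, each state weighted by a power of $A$ equal to the number of $A$-smoothings minus the number of $B$-smoothings, times a factor $\delta = -A^2 - A^{-2}$ for every simple closed curve the smoothings create inside the triangle.

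The heart of the argument is then a finite check. For each of the eight states one must determine which local smoothing at each of the three crossings is the $A$-smoothing and which the $B$-smoothing — this is the only place the over/under data enters, and it must be read off carefully crossing by crossing, since the three crossings are not all of the same type — and then trace the resulting arcs through the three triangle edges to see which of the five tangles (the $M$-split, the $T$-split, the $B$-split, and the two U-splits) it produces and whether a stray circle appears. What should emerge is: the all-$A$ state gives the $M$-split with weight $A^3$; one state gives the $T$-split and one gives the $B$-split, each with weight $A^{-1}$; one state gives one of the two U-splits with weight $A$; and the remaining four states all give the \emph{other} U-split — three of them directly, with weights $A$, $A$, and $A^{-3}$, and the fourth with an extra disjoint circle and weight $A^{-1}$.

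Collecting terms finishes the first identity: the only coefficient that is not immediate is that of the four-state U-split, which equals $A + A + A^{-3} + A^{-1}\delta = 2A + A^{-3} - A - A^{-3} = A$, so the $\delta$ contributed by the stray circle is exactly what makes it collapse to the same monomial $A$ as the other U-split. For the second identity, observe that passing to the mirror image (switching all crossings) replaces $\langle\,\cdot\,\rangle(A)$ by $\langle\,\cdot\,\rangle(A^{-1})$ and turns a BMT triple crossing into a TMB triple crossing, while leaving the five splits — which are smoothings, hence independent of over/under — unchanged; substituting $A\mapsto A^{-1}$ in the first identity therefore yields precisely the stated formula for $\langle\TMB\rangle$.

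The main obstacle is the bookkeeping in the second paragraph: verifying that four of the eight states land on one and the same U-split and that exactly one of those four carries a closed loop, so that the $\delta$-collapse produces a bare monomial. Everything else is the standard skein relation together with mirror symmetry; in practice a single figure exhibiting the eight states, or at least the four coincident ones, is what proves that step.
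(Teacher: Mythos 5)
Your proposal is correct and follows essentially the same route as the paper: resolve the triple crossing into three double crossings, expand by the standard bracket skein relation into $2^3$ states, and collect the eight states into the five split classes (your accounting of which states coincide, and of the single state carrying an extra circle whose $\delta$-factor collapses the coefficient to $A$, is exactly the ``clean up'' the paper leaves implicit). Your mirror-symmetry derivation of the TMB identity from the BMT one is a small, valid shortcut over redoing the expansion.
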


\begin{proof} This follows immediately by resolving each of the particular triple crossings into its three double crossings as in Figure \ref{triple}, and then applying the skein relation for the bracket polynomial  applied to double crossings (cf. \cite{Kau}) to split each of the crossings and clean up the result.
\end{proof}

\begin{lemma} \label{change} Given the bracket polynomial term associated to a particular state $S$:

\begin{enumerate} 
\item Changing from an M-split in a BMT crossing cannot increase the highest power of $A$. 
\item Changing from an M-split in an TMB crossing cannot decrease the lowest power of $A$.
\item Changing from a B-split or T-split to the M-split or a U-split  in a BMT crossing cannot decrease the lowest power of $A$. 
\item Changing from a B-split or T-split to the M-split or a U-split  in an TMB crossing cannot increase the highest power of $A$.
\end{enumerate}
\end{lemma}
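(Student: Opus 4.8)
The plan is to phrase everything in state-sum language and reduce all four parts to one combinatorial fact about non-crossing matchings of six points. Iterating the skein relations of Theorem~\ref{skein} at the $c_3$ triple crossings of $P$, one crossing at a time, one writes $\langle P\rangle=\sum_{S}\big(\prod_c w_c(S)\big)(-A^2-A^{-2})^{|S|-1}$, where $S$ ranges over all assignments of one of the five split types (M-, B-, T-, $U_1$-, $U_2$-split) to each crossing $c$, where $|S|$ is the number of disjoint circles of the resulting planar diagram, and where $w_c(S)$ is the monomial dictated by Theorem~\ref{skein}: at a BMT crossing $w_c(S)=A^{3},A,A,A^{-1},A^{-1}$ according as the split there is M, $U_1$, $U_2$, B, T, and at a TMB crossing all three exponents are negated. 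So the term attached to $S$ is $\pm A^{a(S)}(-A^2-A^{-2})^{|S|-1}$ with $a(S)=\sum_c\deg w_c(S)$; since $(A^2+A^{-2})^{|S|-1}$ has nonzero coefficients in its extreme degrees $\pm 2(|S|-1)$, the highest power of $A$ in this term is $a(S)+2(|S|-1)$ and the lowest is $a(S)-2(|S|-1)$. Hence if $S'$ is obtained from $S$ by changing the split at a single crossing $c$, the highest power changes by $\delta a+2\delta$ and the lowest by $\delta a-2\delta$, where $\delta a=a(S')-a(S)$ and $\delta=|S'|-|S|$.

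Next I would control $\delta a$ and $\delta$. The value of $\delta a$ is read directly off the weights above: for every transition that occurs in the lemma the sign is favorable ($\delta a\le -2$ in the ``highest power'' parts (1) and (4), and $\delta a\ge 2$ in the ``lowest power'' parts (2) and (3)); moreover $|\delta a|=2$ for a transition into a $U$-split and $|\delta a|=4$ for every transition into an M-, B-, or T-split (every such transition involves the M-split at one end, since the four parts never call for changing a B-split directly into a T-split). For $\delta$, fix all crossings other than $c$ and remove a small disk about $c$: the rest of the state is then a planar tangle in a disk, joining the six boundary endpoints by some non-crossing perfect matching $\rho$ together with a fixed number of closed loops not involved in the change. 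Writing $\ell(\mu,\rho)$ for the number of loops formed by superimposing a smoothing $\mu$ at $c$ with $\rho$, we have $|S|=\mathrm{const}+\ell(\mu,\rho)$, hence $\delta=\ell(\mu',\rho)-\ell(\mu,\rho)$; and since superimposing two perfect matchings of six points gives a union of even cycles of total length $6$, the quantity $\ell(\cdot,\rho)$ lies in $\{1,2,3\}$, so $|\delta|\le 2$ always.

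Substituting $|\delta|\le 2$ into $\delta a\pm 2\delta$ disposes of every transition with $|\delta a|=4$, so the only cases left are the transitions into a $U$-split; there the inequality to be proved reduces, in each of the four parts, to $\delta\le 1$, i.e.\ $\ell(\mu_U,\rho)\le\ell(\mu_D,\rho)+1$, where $\mu_U$ is the target $U$-matching and $\mu_D$ is the M-, B-, or T-split being left. To prove this I would use the identification: among the five non-crossing matchings of six cyclically ordered points, the M-, B-, and T-splits are precisely the three containing a ``long chord'' (one for each of the three diameters), while the two $U$-splits are the two containing no long chord. Then (i) $\ell(\mu,\rho)=3$ only if $\mu=\rho$ (three $2$-cycles force the two matchings to coincide), so $\ell(\mu_U,\rho)=3$ forces $\rho=\mu_U$; and (ii) a diameter matching and a $U$-matching always share exactly one (short) arc, with the other four endpoints forming a single $4$-cycle, so $\ell(\mu_D,\mu_U)=2$ for every diameter matching $\mu_D$. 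Thus the inequality could fail only if $\ell(\mu_U,\rho)=3$ and $\ell(\mu_D,\rho)\le 1$; but by (i) the first forces $\rho=\mu_U$, so $\ell(\mu_D,\rho)=\ell(\mu_D,\mu_U)=2$ by (ii) --- a contradiction.

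I expect the real work to be careful bookkeeping rather than any depth: keeping the dictionary between the five split types and the five non-crossing matchings straight, computing $\delta a$ with the correct sign in each case that arises (and separately for BMT and TMB crossings), and --- most importantly --- remembering that the external matching $\rho$ may be \emph{any} non-crossing matching of the six endpoints, so that step (ii) must be verified for an arbitrary diameter matching paired with an arbitrary $U$-matching. The single non-automatic inequality $\ell(\mu_U,\rho)\le\ell(\mu_D,\rho)+1$ is exactly where the contrast ``one long chord'' (for the M-, B-, T-splits) versus ``no long chord'' (for the $U$-splits) does the work.
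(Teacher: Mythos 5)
Your proof is correct and follows essentially the same route as the paper's: read the exponent shift at the altered crossing off Theorem \ref{skein}, and bound the resulting change in the number of state circles so that the $(-A^2-A^{-2})$ factors can at most offset that shift. The one place you go beyond the paper is welcome: the paper simply asserts that a transition involving a U-split changes the number of components by at most one, whereas your non-crossing-matching analysis (three $2$-cycles force $\rho=\mu$, and a diameter matching and a U-matching of the six endpoints always share exactly one arc, so $\ell(\mu_D,\mu_U)=2$) actually proves the needed inequality $\ell(\mu_U,\rho)\le\ell(\mu_D,\rho)+1$.
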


\begin{proof} By Theorem \ref{skein}, changing an M-split in a BMT crossing to either a B-split or a T-split decreases the power of A multiplying the resulting bracket polynomial by 4. But the number of components in the state increases by at most 2, and therefore the additional $(-A^2 -A^{-2})^2$ that comes from the potential increase in the number of components can at most offset the loss of 4 in the power of A. Hence the new state has highest power no larger than the original. If we change an M-split in a BMT-crossing into a U-split, the number of components can change by at most one, and by Theorem \ref{skein}, the power of $A$ multiplying the resulting polynomial goes down by 2. Hence, again, the additional $(-A^2 -A^{-2})$ that comes from the potential increase in the number of components can at most offset the loss of 2 in the power of A. Hence the new state has highest power no larger than the original. A similar argument applies to prove that changing an M-split in a TMB crossing cannot decrease the lowest power of $A$.

Suppose now that we change from a B-split or T-split to the M-split in a BMT crossing.  This increases the power of $A$ multiplying the resulting polynomial by 4, and again changes the number of components of the state by at most 2. Hence, the additional $(-A^2 -A^{-2})^2$ that comes from the potential increase in the number of components can at most offset the increase of 4 in the lowest power of A. Hence the new state has lowest power at least as large as the original state. If we change the B-split or T-split to a U-split in a BMT crossing,  the number of components can change by at most one, and by Theorem \ref{skein}, the power of $A$ multiplying the resulting polynomial goes up by 2. Hence, again, the additional $(-A^2 -A^{-2})$ that comes from the potential increase in the number of components can at most offset the gain of 2 in the power of A. Hence the new state has lowest power no lower than the original. A similar argument applies to prove that changing a B-split or T-split in an TMB crossing cannot increase the highest power of $A$.
\end{proof}

\begin{thm} \label{span} For any nontrivial knot or non-splittable link, span$(<K>) \leq 8 c_3(K)$.
\end{thm}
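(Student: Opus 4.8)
The plan is to adapt the classical Kauffman--Murasugi--Thistlethwaite argument (which bounds $\operatorname{span}\langle K\rangle$ by $4c(K)$ for a connected double-crossing diagram) to the triple-crossing setting, using the five-term expansions of Theorem~\ref{skein} in place of the usual two-term Kauffman skein relation and using Lemma~\ref{change} to locate the states carrying the extreme powers of $A$.

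Fix a triple-crossing projection $P$ of $K$ realizing $c_3(K)$; since $K$ is a nontrivial knot or a non-splittable link, $P$ is connected. Set $n=c_3(K)$ and let $p,q$ be the numbers of $BMT$ and $TMB$ crossings of $P$, so $p+q=n$. Expanding $\langle K\rangle$ with Theorem~\ref{skein} over all states $S$ — a state being a choice of one of the five splits ($M$, $T$, $B$, or one of the two $U$-splits) at each triple crossing — gives $\langle K\rangle=\sum_S A^{e(S)}(-A^2-A^{-2})^{|S|-1}$, where $|S|$ is the number of loops of $S$ and $e(S)$ is the sum, over the crossings, of the exponent $+3,+1,-1$ contributed by an $M$-, $U$-, or $B/T$-split at a $BMT$ crossing (resp.\ $-3,-1,+1$ at a $TMB$ crossing). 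Reading off degrees termwise,
\[
\operatorname{span}\langle K\rangle\;\le\;\max_S\bigl(e(S)+2(|S|-1)\bigr)-\min_S\bigl(e(S)-2(|S|-1)\bigr).
\]
Next I would pin down the extremal states. Starting from any state maximizing $e(S)+2(|S|-1)$ and repeatedly applying Lemma~\ref{change}(1) at $BMT$ crossings and Lemma~\ref{change}(4) at $TMB$ crossings, one pushes every $BMT$ crossing to an $M$-split and every $TMB$ crossing to a $B$- or $T$-split without ever decreasing $e(S)+2(|S|-1)$; call the result $\sigma_+$, and note $e(\sigma_+)=3p+q$ (the value is the same whether a $TMB$ crossing ends with a $B$- or a $T$-split). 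Symmetrically, Lemma~\ref{change}(2) and (3) produce a minimizing state $\sigma_-$ with an $M$-split at every $TMB$ crossing and a $B$- or $T$-split at every $BMT$ crossing, with $e(\sigma_-)=-p-3q$. Substituting,
\[
\operatorname{span}\langle K\rangle\;\le\;4n+2\bigl(|\sigma_+|+|\sigma_-|\bigr)-4,
\]
so everything reduces to proving $|\sigma_+|+|\sigma_-|\le 2n+2$.

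For that inequality I would use a Turaev-surface type construction. Both $\sigma_+$ and $\sigma_-$ replace each triple crossing by one of its three ``straight-strand'' crossingless resolutions (one strand through, the other two making U-turns), and at every crossing they use \emph{different} resolutions. Viewing $P$ as a $6$-valent planar graph with $n$ vertices, thicken it and insert at each vertex a local cobordism interpolating between the two crossingless matchings of its six boundary points specified by $\sigma_+$ and $\sigma_-$. Any two crossingless matchings of six cyclically ordered points differ by at most two ``saddle'' (band) moves, so each local piece can be built with at most two saddles. This yields a cobordism $\Sigma$ from the $|\sigma_+|$ loops of $\sigma_+$ to the $|\sigma_-|$ loops of $\sigma_-$; capping every boundary loop with a disk gives a closed surface $\widehat\Sigma$, connected because $P$ is, with $\chi(\widehat\Sigma)=|\sigma_+|+|\sigma_-|-S$ where $S\le 2n$ is the total number of saddles. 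Since $\chi(\widehat\Sigma)\le 2$, $|\sigma_+|+|\sigma_-|=\chi(\widehat\Sigma)+S\le 2n+2$, and hence $\operatorname{span}\langle K\rangle\le 4n+2(2n+2)-4=8n=8c_3(K)$.

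The main obstacle is this last step: making the $6$-valent Turaev-type surface precise — verifying that the local interpolating pieces glue with the edge bands into a genuine (possibly non-orientable) closed surface, that two saddles per triple crossing always suffice, and that the Euler-characteristic count is exactly $|\sigma_+|+|\sigma_-|-S$. (A more elementary alternative would be an induction on $n$, deleting one triple crossing at a time and tracking how $|\sigma_+|$ and $|\sigma_-|$ change, at the cost of handling resolutions that disconnect the diagram, just as in the classical proof of the $4c(K)$ bound.) A secondary subtlety in the middle step is that Lemma~\ref{change} only licenses moves \emph{toward} $M$-splits at $BMT$ crossings and toward $B/T$-splits at $TMB$ crossings for the maximum (and the mirror-image moves for the minimum), so the reduction to $\sigma_\pm$ must be sequenced so that every intermediate move is of a permitted type — which works, since existing $B$- and $T$-splits may simply be left alone.
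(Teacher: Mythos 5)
Your argument is correct, and its first two-thirds — identifying the extremal states (your $\sigma_\pm$ are the paper's $s_{max}$ and $s_{min}$), using Lemma~\ref{change} to show no state can beat them in top or bottom degree, and reducing the theorem to the inequality $|\sigma_+|+|\sigma_-|\le 2c_3(K)+2$ — coincides with the paper's proof, including the same degree bookkeeping $M_P-m_P=4t+2(|s_{max}|+|s_{min}|)-4$. Where you genuinely diverge is in proving that final inequality. The paper does it elementarily: it observes that the M-split and the B/T-split at a given crossing are ``dual,'' in that if one raises the component count of its state by $0$, $1$ or $2$, the other raises the component count of the opposite state by $2$, $1$ or $0$ respectively, and then inducts on the crossings ($|s_{max}|+|s_{min}|\le 4$ for one crossing, at most $+2$ for each additional one). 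You instead build a Turaev-type cobordism between the two states over the $6$-valent projection graph, with at most two saddles per vertex (correct: the five crossingless matchings of six boundary points form a pentagon under saddle moves, so any two are at distance at most two), cap off, and use $\chi(\widehat\Sigma)=|\sigma_+|+|\sigma_-|-S\le 2$ with $S\le 2n$. Your route costs you the verification of the surface construction that you flag as the main obstacle, but it buys a cleaner global justification: the paper's local duality claim and its implicit ordering of ``subsequent'' crossings are stated rather than proved, whereas the Euler-characteristic count packages the whole inequality into one computation that is insensitive to orientability (a connected closed non-orientable surface still has $\chi\le 1<2$) and does not even need $\sigma_+$ and $\sigma_-$ to differ at every crossing. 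The ``more elementary alternative'' you mention in passing is essentially what the paper does.
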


\begin{proof}Let $P$ be a triple crossing projection of $K$ with $c_3(K)$ triple crossings. For convenience, let $t = c_3(K)$.  Each crossing is either a BMT or a TMB crossing. Define a state to be $s_{max}$ if it is obtained by splitting each BMT crossing as an M-split and each TMB crossing as either a B-split or T-split depending on which of those individual choices maximizes the number of component circles in the state. Similarly, define the state $s_{min}$ to be the state obtained by splitting each TMB crossing as the M-split and splitting each BMT crossing as either a T-split or B-split in order to again maximize the number of components in the state. 

We first prove that there is no state with a higher power of $A$ than $s_{max}$. Any other state is obtained from  $s_{max}$ by changing some number of the M-splits of the BMT crossings and some number of the B and T-splits of the TMB crossings. We consider any such change individually. But Lemma \ref{change} and the fact we have already chosen the B and T-splits of the BMT crossings to maximize the number of components imply that we cannot change any splits and increase the highest exponent of $A$.  Hence $s_{max}$ achieves the highest power of $A$ of any state. Similarly, one can prove that $s_{min}$ achieves the lowest power of any state. 

Given a projection $P$, define $M_P$ to be the highest exponent of $A$ in the polynomial associated to $s_{max}$ and  $m_P$ to be the lowest exponent of $A$ in the polynomial associated to $s_{min}$.

Let $|BMT|$ and $|TMB|$ be the number of BMT and TMB crossings in $P$ respectively. So  $|BMT| + |TMB|= t$. 
Then according to the skein relation,  

$$M_P = 3|BMT| + |TMB| +2|s_{max}|-2$$

$$m_P= -|BMT| -3|TMB| -(2|s_{min}| -2)$$

Hence, \\

\begin{align}span(<K>) \leq M_P- m_P &= 4|BMT| + 4|TMB| + 2(|s_{max}| + |s_{min}|) - 4 \notag \\
 &= 4t+ 2(|s_{max}| + |s_{min}|) - 4 \notag
 \end{align}
 
 Hence, it suffices to prove that $|s_{max}| + |s_{min}| \leq 2t + 2$.

But, considering each of the splits to create $s_{max}$ or $s_{min}$ individually, each split increases the number of components of the diagram by 0, 1 or 2. After all splits are performed to obtain one of these two states, the total number of resulting components is the number of circles in the state. But if an M-split of a BMT crossing increases the number of components of the $s_{max}$ state by 0, 1 or 2, the corresponding B or T-split of that crossing increases the number of components of the $s_{min}$ state by 2,1 or 0 components respectively. Similarly, if a B or T-split of a TMB crossing increases the number of components of the $s_{max}$ state by 0, 1 or 2, the corresponding M-split of that crossing increases the number of components of the $s_{min}$ state by 2,1 or 0 components respectively. 

If there is a single triple crossing, $|s_{max}|$ is 1,2 or 3, and $|s_{min}|$ is 3, 2 or 1 respectively, so $|s_{max}| + |s_{min}| \leq 4$.  Every subsequent triple crossing can increase the number of components of $|s_{max}|$ by 0, 1 or 2 components, but then it increases the number of components of $|s_{min}|$ by 2,1 or 0 components respectively. Hence, $|s_{max}| + |s_{min}| \leq 2t + 2$.  

\end{proof}

\begin{corollary} \label{crossing} For any nontrivial alternating knot or link, $c_3(K) \geq \frac{c(K)}{2}$.
\end{corollary}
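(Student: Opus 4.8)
The plan is to play Theorem \ref{span} off against the classical theorem of Kauffman (and, independently, Murasugi and Thistlethwaite) computing the span of the bracket polynomial of an alternating diagram. Recall that if $D$ is a connected, reduced alternating diagram of $K$ with $n$ crossings, then $\mathrm{span}(<K>) = 4n$, and moreover such a diagram realizes the crossing number, so $n = c(K)$. Hence, for any nontrivial alternating knot (or non-splittable alternating link) we have the exact equality $\mathrm{span}(<K>) = 4c(K)$.

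Now invoke Theorem \ref{span}, which asserts $\mathrm{span}(<K>) \leq 8 c_3(K)$ for any nontrivial knot or non-splittable link. Substituting the alternating value of the span into this inequality gives $4c(K) \leq 8 c_3(K)$, and dividing by $8$ yields $c_3(K) \geq \tfrac{c(K)}{2}$, which is exactly the assertion of the corollary. No new computation is required beyond citing the span formula for alternating diagrams.

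The one point to check — and it is not really an obstacle — is that the hypotheses of the two ingredients match up. Theorem \ref{span} is stated for nontrivial knots and non-splittable links, and the equality $\mathrm{span}(<K>) = 4c(K)$ likewise presupposes a connected reduced alternating diagram (equivalently, that $K$ is non-split as an alternating link; reducedness is automatic once the diagram realizes the crossing number). Under the standing assumption that $K$ is a nontrivial alternating knot or a non-split alternating link, both facts apply verbatim and the corollary follows immediately; the split case reduces to the connected case componentwise and only improves the bound.
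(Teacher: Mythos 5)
Your proposal is correct and is essentially identical to the paper's own argument: both invoke the Kauffman--Murasugi--Thistlethwaite result that a reduced alternating diagram has $\mathrm{span}(<K>) = 4c(K)$ and combine it with Theorem \ref{span} to get $4c(K) \leq 8c_3(K)$. Your extra paragraph checking that the hypotheses of the two ingredients are compatible is a reasonable addition but does not change the route.
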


\begin{proof} Results of \cite{Kau}, \cite{Muras} and \cite{Th} yield the fact that for a reduced alternating knot, \\ span$(<K>) = 4c[K]$. Hence, by Theorem \ref{span}, $4c(K) \leq 8 c_3(K)$, yielding the result.
\end{proof}

Given a projection of a knot, define  a {\it maximal bigon chain} to be a sequence of bigonal complementary regions touching end-to-end that is as long as possible. The {\it crossing length} of such a bigon chain is the number of crossings it contains. Note that a crossing that does not lie on a bigon is considered to be a maximal bigon chain of crossing length 1. The number of such maximal bigon chains in a projection $P$ is called the {\it twist number} of the projection, denoted $tw(P)$. 

We say a knot projection or a portion of a knot projection satisfies the {\it even bigon chain condition} if every maximal bigon chain in it has even crossing length. Note that this implies that every crossing appears in a nontrivial bigon sequence. Specific examples include the two-component 2-braid links and the subset of twist knots $4_1,6_1,8_1,\dots$.  The triple crossing number of knots that possess an alternating  projection that satisfies the even bigon chain condition can be determined exactly. 

\begin{corollary} \label{blob} Let K be an alternating knot or link with a reduced alternating projection that satisfies the even bigon chain condition. Then $c_3(K) = \frac{c(K)}{2}$. 
\end{corollary}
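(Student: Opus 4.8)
The plan is to pair the lower bound $c_3(K)\ge c(K)/2$ supplied by Corollary~\ref{crossing} with a matching upper bound, which I will get from the folding operation of Section 2. Write $P$ for the given reduced alternating projection; since a reduced alternating diagram realizes the crossing number (Tait's conjecture, as already invoked for Corollary~\ref{crossing}), $P$ has exactly $c(K)$ crossings. The goal is a crossing covering collection $\mathcal{C}$ of $P$ made up of exactly $c(K)/2$ circles, each of which meets only two crossings; folding along $\mathcal{C}$ will then give a triple crossing projection of $K$ with $c(K)/2$ triple crossings, so that $c_3(K)\le c(K)/2$.

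Here is the collection. The maximal bigon chains partition the crossings of $P$: a crossing lying on no bigon is its own chain, of crossing length $1$, which is odd and hence excluded by the even bigon chain hypothesis, so every crossing lies in a genuine chain, and distinct chains are disjoint. Take one maximal bigon chain; by hypothesis its crossing length is even, say $2m$, with crossings $x_1,\dots,x_{2m}$ encountered in order along bigons $B_1,\dots,B_{2m-1}$, where the corners of $B_i$ are $x_i$ and $x_{i+1}$ (in the degenerate case that the bigons of the chain close up into a cycle the argument below is unchanged). Put into $\mathcal{C}$ a small circle around each of $B_1,B_3,\dots,B_{2m-1}$. A circle drawn tightly around a bigon meets the diagram only at the bigon's two corner crossings, and at each corner it runs through the crossing with two strands of the knot coming out to either side of it, entering and leaving through the two regions opposite the bigon; so it is a crossing covering circle. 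These $m$ circles have pairwise disjoint corner sets $\{x_1,x_2\},\{x_3,x_4\},\dots$ and together cover the $2m$ crossings of the chain, each exactly once. Carrying this out in every maximal bigon chain and taking the union defines $\mathcal{C}$. Because the chains are disjoint, every crossing of $P$ lies on exactly one circle of $\mathcal{C}$; and since bigons chosen into $\mathcal{C}$ share no corner they share no edge, hence are disjoint closed regions, so the circles of $\mathcal{C}$ may be taken pairwise disjoint. Thus $\mathcal{C}$ is a crossing covering collection, and, as a chain of crossing length $2m$ contributes $m$ circles, $|\mathcal{C}| = \tfrac12\sum 2m_j = \tfrac12 c(K)$, summing over the chains.

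Now fold along $\mathcal{C}$. The circles being disjoint with each crossing on exactly one of them, the foldings may be performed in succession, the strand introduced at one circle staying in a neighborhood of that circle and so clear of the others. Folding along a circle that meets $n$ double crossings creates $n-1$ triple crossings; each circle of $\mathcal{C}$ meets exactly $2$ crossings, so each yields one triple crossing, and since every crossing of $P$ lies on some circle of $\mathcal{C}$ the outcome is a triple crossing projection of $K$ with exactly $|\mathcal{C}| = c(K)/2$ triple crossings. Hence $c_3(K)\le c(K)/2$, which together with Corollary~\ref{crossing} gives $c_3(K)=c(K)/2$.

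The points needing care are the local claim that a circle drawn tightly around a bigon is a bona fide crossing covering circle, meeting the projection exactly at the two corner crossings and passing through each of them correctly, and the bookkeeping that maximal bigon chains partition the crossing set, so that the selected alternate bigons amount to a perfect pairing of the crossings by disjoint circles; one should also check that ``choose alternate bigons'' works verbatim when a chain's bigons close up into a cycle (which happens only in very small diagrams). The evenness hypothesis is used precisely at this last point: it is exactly what makes the alternate-bigon choice in each chain cover every one of its crossings with none left over, and that is what pins the number of circles, hence of resulting triple crossings, at $c(K)/2$.
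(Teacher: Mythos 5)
Your proposal is correct and follows essentially the same route as the paper: the lower bound comes from Corollary~\ref{crossing}, and the upper bound comes from merging the two crossings of every other bigon in each maximal bigon chain into a single triple crossing. The only cosmetic difference is that the paper performs this merge by directly twisting each chosen bigon (Figure~\ref{bigon}), whereas you realize the same local move as a folding along a small crossing covering circle encircling that bigon; your version also spells out the partition/disjointness bookkeeping that the paper leaves implicit.
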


\begin{proof} Any bigon can be turned into a triple crossing by twisting as in Figure \ref{bigon}. Hence, by starting at one end of each maximal bigon chain and twisting every other bigon, we can turn the chain into a sequence of triple crossings, one for each pair of crossings. Hence $c_3(K) \leq \frac{c(K)}{2}$. Corollary \ref{crossing} completes the proof.
\end{proof}

\begin{figure}[h]
\begin{center}
\includegraphics[scale=0.7]{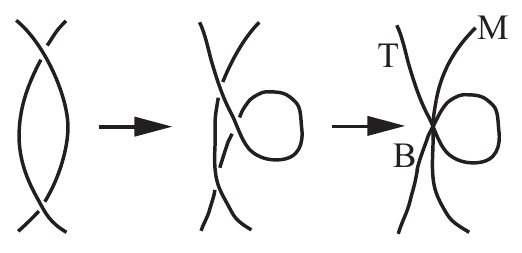}
\caption{Transforming a bigon into a triple crossing.}
\label{bigon}
\end{center}
\end{figure}

In fact, we can also determine the exact triple crossing number for additional alternating knots. Suppose that in a reduced alternating projection $P$  of a knot or link $K$, we can find a crossing covering circle that crosses exactly three crossings, and the remainder of the projection when these three crossings are excluded satisfies the even bigon chain condition. Then by folding along the crossing covering circle, we obtain two triple crossings. Together with the triple crossings obtained from the rest of the projection, we have $\frac{c+1}{2}$ triple crossings. Since the original projection is alternating with $c(K)$ crossings, we know that $c_3(K) \geq \frac{c(K)}{2}$. But the fact $c(K)$ is odd and $c_3(K)$ is an integer implies $c_3(K) \geq \frac{c+1}{2}$. Thus, we have proved the following corollary.

\begin{corollary} \label{new} Let $K$ be an alternating knot or link with a reduced alternating projection such that there exists a crossing covering circle through three crossings and the remainder of the projection satisfies the even bigon chain condition. Then $c_3(K) = \frac{c(K)+1}{2}$.
\end{corollary}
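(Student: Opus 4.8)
The plan is to prove the two inequalities $c_3(K)\le\frac{c(K)+1}{2}$ and $c_3(K)\ge\frac{c(K)+1}{2}$ separately, packaging the discussion preceding the statement into a self-contained argument. Write $c=c(K)$ and let $P$ be the given reduced alternating projection, with $C$ the crossing covering circle meeting exactly three crossings.

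For the upper bound, I would first record a parity observation: deleting the three crossings met by $C$ leaves a sub-projection satisfying the even bigon chain condition, so the number $c-3$ of remaining crossings is even, and hence $c$ is odd. Next, perform a folding along $C$. By the folding construction used in the proof of Theorem~\ref{braidtheorem}, a covering circle meeting $n$ double crossings yields $n-1$ triple crossings, so folding along $C$ produces exactly $2$ triple crossings, and (choosing the stretched strand to be the overstrand at the initial crossing) the resulting monogon lets us isotope away the extra triple crossing; crucially this move is supported near $C$ and those three crossings and leaves the rest of the projection untouched. On that untouched remainder, apply the twisting move of Corollary~\ref{blob} (Figure~\ref{bigon}): starting from one end of each maximal bigon chain and twisting every other bigon turns a chain of crossing length $2k$ into $k$ triple crossings, so the $c-3$ remaining crossings become $\frac{c-3}{2}$ triple crossings. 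Every crossing of the final diagram is now a triple crossing, so we have exhibited a triple crossing projection with $2+\frac{c-3}{2}=\frac{c+1}{2}$ triple crossings, giving $c_3(K)\le\frac{c(K)+1}{2}$.

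For the lower bound, since $K$ is a nontrivial alternating link, Corollary~\ref{crossing} gives $c_3(K)\ge\frac{c(K)}{2}$; as $c(K)$ is odd and $c_3(K)$ is an integer, this forces $c_3(K)\ge\frac{c(K)+1}{2}$. Combining the two inequalities yields $c_3(K)=\frac{c(K)+1}{2}$.

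The step requiring the most care is verifying that the folding along $C$ and the twisting on its complement are genuinely compatible and jointly produce a legitimate triple crossing projection: one must confirm that folding along $C$ really contributes exactly two triple crossings after the monogon isotopy, and that the maximal bigon chains used for the twisting lie in the part of $P$ that the fold does not disturb, so that the even bigon chain condition is still available there afterward. These are local verifications, but they carry the geometric content of the argument; the remainder is bookkeeping with the parity of $c(K)$.
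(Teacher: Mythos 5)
Your proposal is correct and follows essentially the same route as the paper: fold along the three-crossing covering circle to get two triple crossings, twist every other bigon in the remaining even bigon chains to get $\frac{c-3}{2}$ more, and combine the resulting upper bound $\frac{c+1}{2}$ with Corollary~\ref{crossing} plus the parity of $c(K)$ for the matching lower bound. The extra care you flag about the fold and the twists being supported in disjoint parts of the diagram is a reasonable point that the paper treats implicitly.
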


This result applies to all the twist knots not covered by Corollary \ref{blob}, for instance. Between the two types of projections encompassed by the above two corollaries, we can exactly determine the triple crossing number of the following prime alternating knots of nine or fewer crossings: $3_1$, $4_1$, $5_2$, $6_1$, $7_2$, $7_4$, $7_6$, $7_7$,  $8_1$, $8_3$, $8_{12}$, $9_2$, $9_5$, $9_8$, $9_{12}$, $9_{14}$, $9_{15}$, $9_{19}$, $9_{21}$, $9_{25}$, $9_{35}$, $9_{37}$, $9_{39}$ and $9_{41}$. Note also that these corollaries imply that triple crossing number is additive under composition when either both the factor knots have alternating projections that satisfy the even bigon chain condition, or one has an alternating projection that satisfies the even bigon chain condition and the other has a projection that satisfies the hypotheses of Corollary \ref{new}.

\section{n-crossing projections}

\begin{defn} Given a projection of a knot or link,  a {\em multi-crossing}  or {\em $n$-crossing} is a singular point corresponding to $n$ strands of the knot all crossing at a single point such that each strand passes straight through the crossing. The $n$ strands can be numbered 1 to $n$ according to the order they occur from topmost to bottommost as they pass through the crossing.
\end{defn}

\begin{defn} Let $L$ be a knot or link. An {\em $n$-crossing projection} of $L$ is a projection such that the only singularities are $n$-crossings. The {\em $n$-crossing number} of L, denoted $c_n(L)$,  is the least number of crossings in an $n$-crossing projection of $L$.
\end{defn}

The following theorem shows that the $n$-crossing number of any knot or link is defined.

\begin{thm} Given any integer $n \ge2$, and any knot or link $L$, there exists a projection of $L$ with only $n$-crossings.
\end{thm}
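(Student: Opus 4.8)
The plan is to use the same folding technique established for triple crossings in Theorem \ref{braidtheorem}, but generalized so that each fold stacks more strands onto each crossing. First I would recall Alexander's theorem: every knot or link $L$ has a braid representative $\beta$ on $s$ strings. Draw $\beta$ with the $s$ strings running vertically, and consider the $s-1$ concentric circles lying between consecutive strings, exactly as in the proof of Theorem \ref{braidtheorem}; these form a crossing covering collection, since outside the braid box each circle is disjoint from $L$ and inside it each circle meets $L$ only transversally along a pair of strands emerging from each crossing it passes through.

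The key modification is the folding step. Instead of stretching a single strand around a crossing covering circle to create triple crossings, I would iterate: given a crossing covering circle $C$ meeting double crossings, fold one strand around $C$ to convert all but one of those double crossings into triple crossings (using the monogon trick of Figure \ref{folding}(c) to absorb the extra one), then fold a second strand around (a slight push-off of) $C$ on top of the result, converting each triple crossing into a $4$-crossing, and so on. After $n-2$ successive foldings along (parallel copies of) the same circle $C$, every crossing that originally lay on $C$ has become an $n$-crossing, with each fold again contributing a monogon that lets us isotope away one surplus crossing. Performing this iterated folding along each of the $s-1$ concentric circles in turn yields a projection of $L$ in which every singular point is an $n$-crossing, which is what we want.

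The main obstacle I expect is bookkeeping at the crossings where several of the concentric circles' foldings interact, and verifying that the iterated folds along a single circle genuinely produce honest $n$-crossings (all $n$ strands passing straight through a common point, in a well-defined top-to-bottom order) rather than a tangle of nearby multi-crossings. One must check that when a new strand is laid on top of an existing $k$-crossing along the pushed-off circle, it can be positioned to pass straight through the same point, so the result is a genuine $(k{+}1)$-crossing; this is a local picture essentially identical to Figure \ref{triple} with more sheets, but it should be drawn carefully. A secondary point is the degenerate case $n=2$, where no folding is needed and the braid projection itself already suffices; and one should note that if some concentric circle meets fewer than the strands needed to perform $n-2$ folds, one can simply fold the same strand around repeatedly, since a strand may be stretched around $C$ as many times as desired. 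Once these local verifications are in place, the existence of $c_n(L)$ follows immediately, and one also extracts the bound $c_n(L) \le c_\beta - s_\beta + 1$ just as in Theorem \ref{braidtheorem}.
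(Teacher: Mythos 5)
Your plan differs from the paper's and, as written, has a genuine gap in the iteration step. The paper does not iterate foldings at all: for even $n$ it replaces each double crossing \emph{locally} by an $n$-crossing (weaving one strand back and forth through the other at a single point, as in Figure \ref{quadruple}), and for odd $n$ it first produces an $(n-1)$-crossing projection this way and then performs a \emph{single} folding along a crossing covering collection of the underlying double-crossing projection. Your proposal instead tries to apply the folding operation $n-2$ times along parallel copies of the same circle, and this is where the trouble lies. After the first folding with the monogon removal, the initial crossing site is completely resolved: only the former understrand passes through that location, and it still runs from one side of $C$ to the other. So $C$ (and any push-off of it) now meets the knot transversally at a point that is not a crossing, and it is no longer a crossing covering circle. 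The hypothesis needed to perform the second folding has been destroyed by the first.

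There is a second, independent problem with invoking the monogon trick at each stage. At a $k$-crossing with $k\geq 3$, dragging the top strand around and then lifting the resulting monogon off does not eliminate the crossing: the two edges of the monogon are removed, but the remaining $k-1$ strands still cross one another, leaving a $(k-1)$-crossing at that site. So on the second fold you would be left with, say, a double crossing sitting inside an otherwise $4$-crossing diagram, which is not an $n$-crossing projection. (Your fallback of ``folding the same strand around repeatedly'' also needs care: a strand that spirals around $C$ twice must eventually exit the spiral, and that exit crosses the outer loop at a point that is not one of the designated crossing sites.) The fix is essentially to abandon the iteration: either keep the monogons and accept extra crossings, or better, adopt the paper's local even-$n$ replacement so that at most one global folding is ever needed. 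Your observation that the braid circles form a crossing covering collection, and the resulting bound $c_3(L)\le c_\beta - s_\beta +1$, are fine for the triple-crossing case, but they do not by themselves extend to all $n$ in the way you describe.
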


\begin{proof} We already know this for $n = 2, 3$. For $n=4$, we can take a double-crossing projection, and replace each double crossing with a 4-crossing as in Figure \ref{quadruple}. A similar construction works to show that there exists an $n$-crossing projection for any even $n$.

     In the case of $n$ odd, we use the following trick. We know that any double crossing projection has a crossing covering collection of circles. We first convert each crossing into an $(n-1)$-crossing projection as in the previous paragraph. We then use folding on the crossing covering collection to obtain an $n$-crossing projection.
 \end{proof}
     
 \begin{figure}[h]
\begin{center}
\includegraphics[scale=0.7]{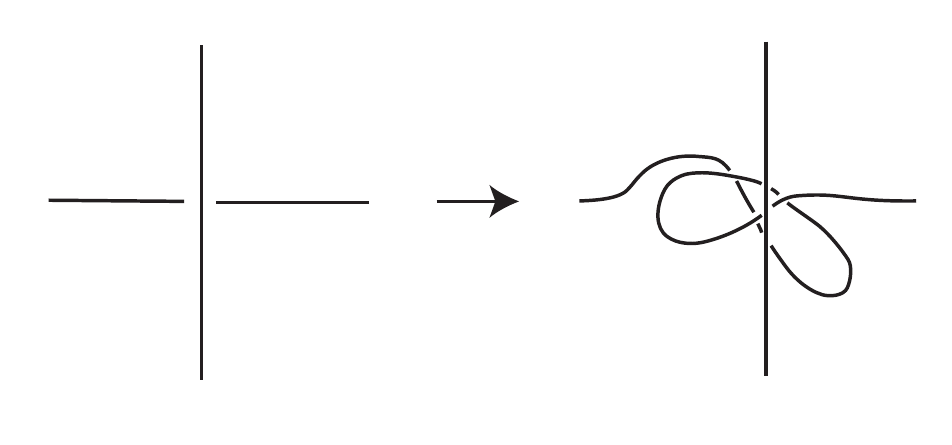}
\caption{Converting a double crossing into a quadruple crossing.}
\label{quadruple}
\end{center}
\end{figure}

     Just as we saw for triple crossing number, but adjusted for n-crossings, we have the following inequalities.
     
     $$ \frac{2c(L)}{n(n-1)} \le c_n(L) \le c(L) -1$$

Just as in Example \ref{string} when we created a 3-string braid that yielded an example with  $c(L) = 3 c_3(L)$, we can create an $n$-string braid that satisfies $c(L) = \frac{n(n-1)}{2}c_n(L)$.

\section{Triple Crossing Number and Hyperbolic Volume}

Knots fall into three disjoint categories: torus knots, satellite knots (including composite knots) and hyperbolic knots. This fact, proved by W. Thurston, has revolutionized knot theory. A knot is hyperbolic if its complement $S^3-K$ carries a hyperbolic metric. Such a metric is uniquely determined and hence, the hyperbolic volume of its complement becomes an invariant that can be used to distinguish it from other knots. 

In a variety of papers, there have been attempts to determine bounds on hyperbolic volume from projections of knots. In particular, in \cite{Lack}, a bound was given in terms of the twist number of a projection of the link, as defined in the last section. 

In an appendix by Ian Agol and Dylan Thurston to \cite{Lack}, it was proved that if a hyperbolic knot has diagram $D$, then $vol(S^3 -L) \leq 10 v_0 (tw(D)-1)$ where $v_0$ is the volume of an ideal regular tetrahedron, approximately 1.01494. Note that in the case a minimal crossing knot diagram has no bigons, this becomes  $vol(S^3 -L) \leq 10 v_0 (c(K)-1)$. 

There is also a second bound that follows from a construction of  D. Thurston. One places an octahedron at each crossing with its top vertex on the overstrand and its bottom vertex on the understrand as in Figure \ref{octahedron}. 

\begin{figure}[h]
\begin{center}
\includegraphics[scale=0.7]{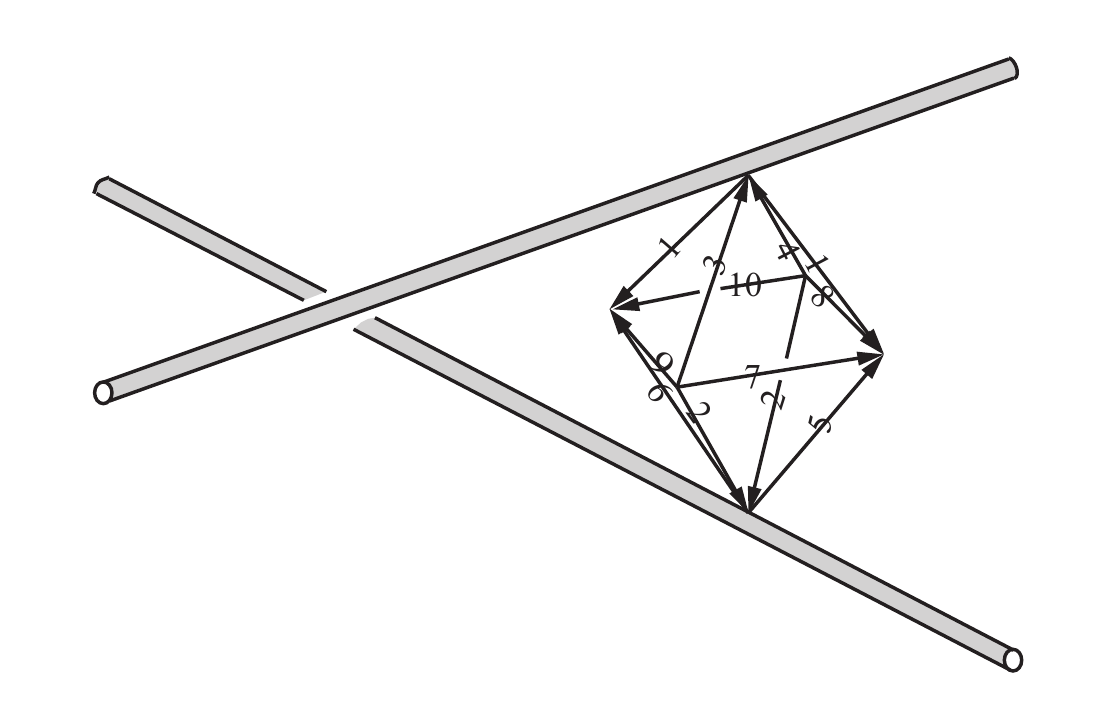}
\caption{Placing an octahedra between each crossing.}
\label{octahedron}
\end{center}
\end{figure}

Then, as per the edge labellings in Figure \ref{octahedron}, one pulls two of the opposite remaining vertices up to meet above the crossing at a point denoted $+\infty$, thereby identifying two edges on the octahedron, and pulls the remaining two vertices down to meet below the crossing at a point denoted $-\infty$, again identifying two edges of the octahedron.Then one can glue together the faces on the octahedra at the various crossings in order to fill the complement of the knot. If the two vertices at $\pm \infty$ are included, we have a decomposition of the knot complement into octahedra with some finite vertices and some ideal vertices. This construction has been used to attempt to prove Kaeshaev's Volume Conjecture for various categories of knots. See \cite{Ohn} and \cite{Yok} for instance. However, one also obtains an upper bound for the hyperbolic volume of the knot or link complement since  an octahedron in hyperbolic 3-space has  volume at most the volume of an ideal regular octahedron, which is approximately $v_{oct} = 3.6638\dots$. So $vol(S^3 -L) \leq  c(K) v_{oct}$. In fact, this bound can be improved.

\begin{thm}\label{volumethm} Let $K$ be a hyperbolic knot with $c(K) \geq 5$ crossings. Then \\ $vol(S^3 -L) \leq  (c(L) - 5) v_{oct} + 4 v_0$.
\end{thm}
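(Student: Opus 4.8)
The plan is to refine the naive octahedral decomposition, which gives $\mathrm{vol}(S^3-K)\le c(K)\,v_{oct}$, by exploiting the fact that near a "corner" of the diagram several of the octahedra can be collapsed or combined so that fewer than one full ideal octahedron's worth of volume is contributed. Concretely, I would start from D.~Thurston's construction described just above the statement: place an octahedron at each of the $c(K)$ crossings, pull two opposite equatorial vertices to $+\infty$ and the other two to $-\infty$, and glue faces along the strands of the diagram to triangulate $S^3-K$ (with the two ideal points at $\pm\infty$). Since each octahedron in a hyperbolic structure, or indeed in the volume-maximizing straightening of this cell decomposition, has volume at most $v_{oct}$, the crude bound follows; the goal is to save $5v_{oct}-4v_0$ by treating five specially chosen crossings more carefully.

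The key steps, in order, would be: (1) Choose a crossing $c_0$ and follow the overstrand out of it to the next crossing, and so on, picking out a short arc of the diagram that meets five crossings in a controlled local configuration (one can always do this once $c(K)\ge 5$; if the diagram has a bigon or a longer twist region this is immediate, and the generic case can be arranged by looking at a face of the diagram with few sides). (2) At these five crossings, instead of inserting five separate octahedra, observe that the gluing pattern forces many of the octahedral faces to be folded onto one another, so the union of these five octahedra, after the $\pm\infty$ identifications, is combinatorially a union of a bounded number of ideal tetrahedra—in fact, the claim $4v_0$ suggests that the relevant piece decomposes into exactly four ideal tetrahedra, whose maximal volume is $v_0$ each. (3) The remaining $c(K)-5$ crossings still each contribute an octahedron of volume at most $v_{oct}$. (4) Because hyperbolic volume is maximized by the complete structure and any straightening of a cell decomposition has volume at least $\mathrm{vol}(S^3-K)$ by Gromov–Thurston (the straightening-chains / "volume rigidity" argument used in the Agol–Dylan Thurston appendix to \cite{Lack}), summing the local bounds gives
$$\mathrm{vol}(S^3-K)\;\le\;(c(K)-5)\,v_{oct}+4v_0,$$
which is the assertion (with $L=K$).

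The main obstacle I expect is step (2): verifying that at a suitably chosen cluster of five crossings the octahedra genuinely collapse to only four tetrahedra, rather than, say, to some larger or structure-dependent number, and doing this in a way that is uniform over all knots with $c(K)\ge 5$ and does not secretly require the diagram to be reduced, alternating, or to contain a bigon. This is really a careful combinatorial/normal-surface bookkeeping exercise on how the faces of adjacent octahedra are identified along a strand and through the $\pm\infty$ vertices; the arithmetic $5v_{oct}\rightsquigarrow 4v_0$ is only plausible because an ideal octahedron can be cut into four ideal tetrahedra, so the content is showing the degeneration at the chosen crossings is exactly severe enough to realize that cut. A secondary point to be careful about is the hypothesis $c(K)\ge 5$: one must check that the chosen local configuration of five crossings actually exists and is embedded in the diagram (no crossing used twice), which is why the bound is stated only in that range, and that the knot being hyperbolic rules out the degenerate small cases where the construction would fail.
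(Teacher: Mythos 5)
Your proposal correctly sets up the octahedral decomposition and correctly identifies that the content of the theorem is a local collapse near five crossings, but the step you flag as ``the main obstacle'' --- identifying the configuration and verifying that the five affected octahedra contribute only $4v_0$ --- is precisely the entire content of the paper's proof, and your guessed configuration is not the right one. The paper does not follow an overstrand through five consecutive crossings. Instead, the collapse is forced by the need to convert the decomposition into an honest \emph{ideal} one: the two vertices at $\pm\infty$ must be pulled onto the knot, which is accomplished by collapsing a pair of faces shared between two octahedra lying at the two endpoints of a single \emph{edge} of the projection. The edge is chosen so that it separates a face of the diagram with at least $3$ sides from a face with at least $4$ sides; such an edge always exists once $c(K)\geq 5$. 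The two octahedra at the endpoints of that edge flatten completely (contributing $0$), and the collapse propagates around the boundaries of the two adjacent complementary regions, degenerating the other octahedra incident to those regions into pairs of tetrahedra. In the worst case (bigons on the boundaries of the two regions causing some of these tetrahedra to be identified with each other), the surviving pieces among the five affected octahedra amount to one octahedron collapsing to two tetrahedra and two more collapsing to one tetrahedron each, giving $2v_0+v_0+v_0=4v_0$.

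Note in particular that your heuristic for the constant $4v_0$ --- that an ideal octahedron can be subdivided into four ideal tetrahedra --- points in the wrong direction: the $4v_0$ is not the residue of a single octahedron but an aggregate over several partially and totally collapsed octahedra, two of which disappear entirely. Your step (4), invoking the straightening argument to turn a polyhedral decomposition into a volume upper bound, is fine and matches the standard justification. But without the correct choice of edge (and the $n\geq 3$ / $n\geq 4$ face condition that makes the count work, including the bigon worst case), the proposal does not constitute a proof; the configuration you propose along an overstrand arc need not produce any degeneration at all, since those five octahedra are glued along faces in the generic pattern and each remains a full ideal octahedron.
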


\begin{proof}  Take the decomposition of the complement into octahedra, as described above. By pulling the vertices at $+\infty$ and $-\infty$ to the knot, one obtains an ideal polyhedral decomposition of the knot complement into ideal octahedra and ideal tetrahedra. As in \cite{Ohn}, we collapse two faces that are shared between two octahedra, both of which occur at the crossings at the endpoints of an edge of the projection. For any projection of five or more crossings, there always exists an edge between two faces, the first of which is an n-gon with $n \geq 3$ and the second of which is an n-gon with $n \geq 4$. The two octahedra at the ends of this edge flatten into triangles. Each of the other octahedra around these two faces collapse down to two tetrahedra, and two more octahedra on the continuation of this edge in either direction also  collapse down to two tetrahedra. However, if  there are bigons on the boundary of the two regions, these last two tetrahedra might be identified with some of the others being collapsed. In the worst case of two such bigons,  one octahedron collapses to two tetrahedra, and two others collapse to one, yielding our bound.
Thus one obtains the improved bound.
\end{proof}

We now consider the case of a knot or link in a triple crossing projection.

\begin{thm} \label{tripleoct}Let $L$ be a knot or link in $S^3$. Then the complement of $L$ can be decomposed into $2 c_3(L)$ octahedra with vertices that are ideal or occurring at $\pm \infty$.
\end{thm}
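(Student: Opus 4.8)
The plan is to reduce the statement to D.\ Thurston's octahedral decomposition by first turning the triple-crossing projection into an ordinary double-crossing diagram, and then collapsing away the octahedra that this conversion makes redundant. Set $t = c_3(L)$ and let $P$ be a triple-crossing projection of $L$ with $t$ triple crossings. First I would resolve each triple crossing into three double crossings, as in the side view in Figure \ref{triple}: a small generic perturbation of the three concurrent strands produces three double crossings bounding a small triangular complementary region. Carrying this out at all $t$ triple crossings yields an ordinary diagram $D$ of $L$ with exactly $3t$ crossings, together with $t$ pairwise disjoint triangular regions $\Delta_1,\dots,\Delta_t$, one for each original triple crossing.

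Next I would apply the octahedral construction of D.\ Thurston to $D$, exactly as recalled before Theorem \ref{volumethm}: place an octahedron at each crossing with its top vertex on the overstrand and its bottom vertex on the understrand, pull two opposite equatorial vertices up to a common point $+\infty$ and the other two down to $-\infty$, and glue up the resulting faces across the edges of $D$. This expresses $S^3-L$ as a union of $3t$ octahedra all of whose vertices are ideal (lying on $L$) or occur at $\pm\infty$.

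The heart of the argument is then to see that each triangle $\Delta_i$ contributes one redundant octahedron. Around $\Delta_i$ sit precisely the three octahedra coming from its three crossings, glued to one another along faces lying over $\Delta_i$. Labelling the three bounding arcs of $\Delta_i$ by the strands $T$, $M$, $B$ of the original triple crossing and tracking which vertices are the top and bottom vertices --- using that at a triple crossing $T$ passes over both other strands and $B$ passes under both --- one finds that one of these three octahedra (the one at the $M$-over-$B$ crossing) is flat: two of its faces coincide after the identifications to $\pm\infty$, so it collapses to a triangle. Collapsing it simply glues that pair of faces together and removes the octahedron, which is exactly the kind of move used in \cite{Ohn} and in the proof of Theorem \ref{volumethm}. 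Performing this collapse in each of the $t$ disjoint regions $\Delta_i$ leaves $2t$ genuine octahedra that still tile $S^3-L$, with vertices still ideal or at $\pm\infty$, which is the asserted decomposition into $2c_3(L)$ octahedra.

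I expect the flattening step to be the main obstacle: it requires drawing the local picture of the octahedral gluing around a triangular complementary region carefully enough to pin down which of the three octahedra degenerates and to check that the degeneration is compatible with the global gluing (in particular that the two faces being identified are genuinely distinct faces of the complex). An alternative, and perhaps cleaner, route would be to build a ``triple-crossing gadget'' directly --- a polyhedral neighborhood of a single triple crossing subdivided into two octahedra, with the two vertices at $\pm\infty$ and six ideal vertices spaced along the three strands --- and then to verify that these gadgets glue together consistently along the arcs of the projection; this avoids passing through a $3t$-crossing diagram but calls for the same kind of local bookkeeping.
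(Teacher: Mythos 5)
Your main route has a genuine gap at exactly the step you flag as the principal obstacle: the claim that one of the three octahedra surrounding a small triangular region $\Delta_i$ is automatically flat. In D.~Thurston's construction, pulling two opposite equatorial vertices up to $+\infty$ and the other two down to $-\infty$ identifies two pairs of \emph{edges} of each octahedron, not any pair of faces: after the identification every face has vertex set $\{v,+\infty,-\infty\}$ (with $v$ the top or bottom vertex), but the equatorial edges of two different faces of the same octahedron run from $+\infty$ to $-\infty$ through \emph{different} complementary regions of the diagram, so no two faces of a single octahedron are ever identified --- regardless of whether an adjacent region is a bigon, a triangle, or anything else. In particular the octahedron at the $M$-over-$B$ crossing is a perfectly nondegenerate piece of the decomposition of the resolved $3t$-crossing diagram, and as written your main route produces $3c_3(L)$ octahedra, not $2c_3(L)$. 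The collapses that actually are available (the move from \cite{Ohn} used in the proof of Theorem \ref{volumethm}) are of a different nature: collapsing the two faces over an edge of the diagram flattens \emph{both} octahedra at its endpoints and simultaneously identifies $\pm\infty$ with points on the link, so it is a global move that cannot be performed once per triangle, locally and independently, to delete exactly one octahedron from each triple.

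The ``cleaner alternative'' you sketch in your final sentence is in fact the paper's proof: a single octahedron is placed at each triple crossing with its top vertex on the top strand and its bottom vertex on the bottom strand; the middle strand punctures the octahedron through two opposite horizontal edges; four edges are added passing over and under the two puncture points; the intersection of the middle strand with the octahedron is collapsed to a point; and the collapsed octahedron is then cut into two octahedra (Figures \ref{octahedron3} and \ref{octahedron6}), after which the $2c_3(L)$ resulting octahedra are glued along faces to fill the complement. So the local gadget you propose at the end is the right idea, but your submitted argument does not establish it, and the detour through the $3t$-crossing double-crossing diagram does not obviously recover the factor of $2$ rather than $3$.
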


\begin{proof} At each triple crossing we again insert an octahedron, with its top vertex on the overstrand and its bottom vertex on the understrand. Then it is intersected by the middle strand which passes through a horizontal edge, through the interior and out the opposite horizontal edge, as in Figure \ref{octahedron3}. This splits each of these two horizontal edges into two edges. We add four additional edges to the exterior of the octahedron, each going over or under one of the two points where the middle strand punctures the exterior of the octahedron. We then collapse the intersection of the middle strand with the octahedron down to a point, and then split the collapsed octahedron into two octahedra, as in Figure \ref{octahedron6}. These pairs of octahedra at each triple crossing can then be glued together along faces to fill the entire knot complement. 

\begin{figure}[h]
\begin{center}
\includegraphics[scale=0.7]{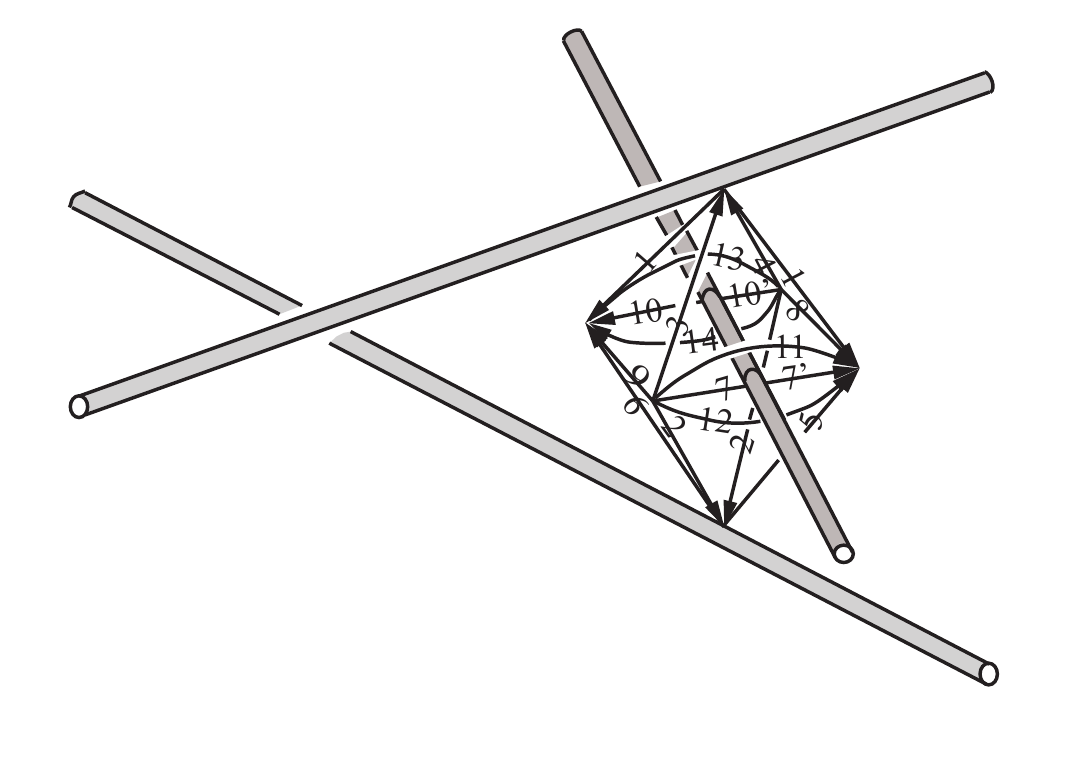}
\caption{The middle strand intersects the octahedron at each triple crossing.}
\label{octahedron3}
\end{center}
\end{figure}

\begin{figure}[h]
\begin{center}
\includegraphics[scale=0.7]{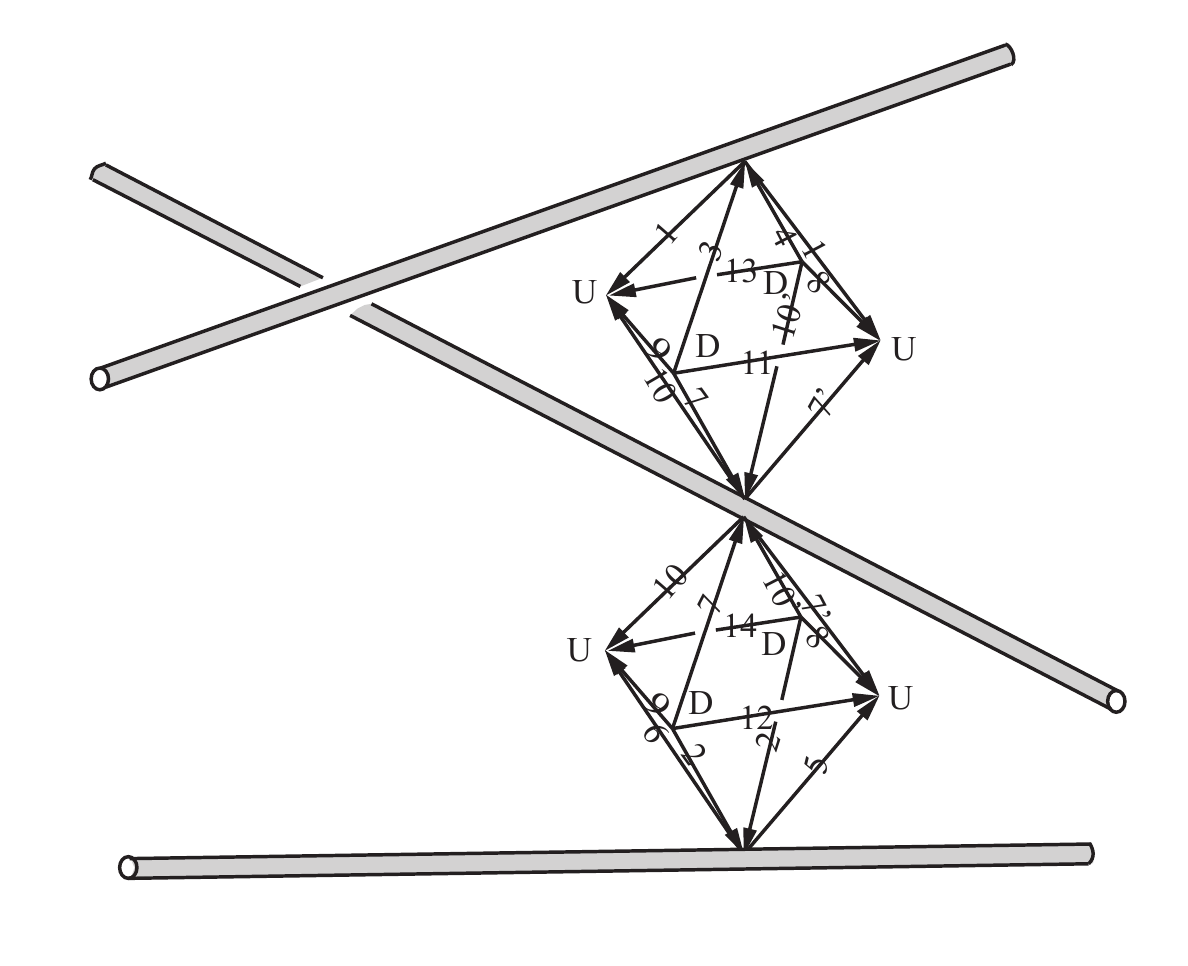}
\caption{Collapse the intersection of the middle strand with the octahedron and cut open  to obtain two octahedra.(U and D denote vertices that are pulled up and down, respectively.)}
\label{octahedron6}
\end{center}
\end{figure}

\end{proof}

\begin{corollary} Let $L$ be a hyperbolic knot or link in $S^3$. Then, $vol(S^3 -L) \leq 2 v_{oct} (c_3(L)-2) + 4 v_0$.
\end{corollary}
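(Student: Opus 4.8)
The plan is to feed the octahedral decomposition of Theorem \ref{tripleoct} into a volume estimate and then sharpen it exactly as in the proof of Theorem \ref{volumethm}. By Theorem \ref{tripleoct}, $S^3-L$ is built from $2c_3(L)$ octahedra whose vertices are ideal or lie at $\pm\infty$. First I would pull the vertices at $+\infty$ and $-\infty$ out to the link, converting the decomposition into an ideal polyhedral decomposition all of whose pieces are ideal octahedra (with possibly some flattening to ideal tetrahedra or to lower-volume pieces, which only helps). Straightening this decomposition in the hyperbolic structure on $S^3-L$ (Gromov--Thurston) and using that a hyperbolic ideal octahedron has volume at most $v_{oct}$, with equality only for the regular one, yields the crude bound $vol(S^3-L)\le 2c_3(L)\,v_{oct}$.

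To improve this to the claimed inequality I would run the collapsing argument from the proof of Theorem \ref{volumethm}. Two octahedra that share a pair of faces lying over a single edge of the triple-crossing projection can be flattened to triangles by collapsing those faces, and the octahedra adjacent to the two collapsed complementary regions then degenerate into ideal tetrahedra. The arithmetic to target is: flatten two octahedra (removing $2v_{oct}$) and replace two further octahedra by two ideal tetrahedra apiece (removing $2v_{oct}$ and adding $4v_0$), for a net saving of $4v_{oct}-4v_0$; subtracting this from $2c_3(L)v_{oct}$ gives precisely $2v_{oct}(c_3(L)-2)+4v_0$. One must check that some edge of the triple-crossing projection always bounds two complementary regions large enough for this collapse to go through whenever $c_3(L)\ge 2$, which holds for every hyperbolic $L$; since six edges emanate from a triple crossing, there is more room than in the double-crossing setting, so no restriction analogous to $c(K)\ge 5$ should be required.

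The main obstacle is the combinatorial bookkeeping of exactly which octahedra collapse, and into what. The gluing pattern among the $2c_3(L)$ octahedra is more intricate than in the double-crossing case, because each octahedron at a triple crossing has been split in two along the middle strand and four extra edges were added around the two punctures of that strand (Figures \ref{octahedron3} and \ref{octahedron6}); one has to trace through this to see which faces are shared between which octahedra, both within a single triple crossing and between adjacent triple crossings. In addition, just as in Theorem \ref{volumethm}, the degenerate configurations in which bigon-type regions on the boundary of the two collapsed faces cause some of the would-be ideal tetrahedra to be identified with one another must be handled, and one should confirm that the stated bound is the worst case over all such configurations. Once this is in place, the corollary follows by combining the crude bound with the net saving.
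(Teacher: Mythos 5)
Your proposal follows essentially the same route as the paper's proof: start from the $2c_3(L)$-octahedron decomposition of Theorem \ref{tripleoct}, collapse two faces shared by two octahedra so as to pull the vertices at $\pm\infty$ to the link (flattening those two octahedra to triangles), and let two further octahedra adjacent to the collapsed faces degenerate into pairs of ideal tetrahedra, which yields exactly the arithmetic $2c_3(L)v_{oct}-4v_{oct}+4v_0$. The combinatorial checks you flag (existence of a suitable edge, possible identifications among the degenerate tetrahedra) are legitimate concerns, but the paper's own proof treats them just as briefly, so your argument matches it in both substance and level of detail.
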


\begin{proof} We begin with the octahedral decomposition given by Theorem \ref{tripleoct}. As in the proof of Theorem \ref{volumethm}, we  collapse two faces shared by two octahedra in order to identify the vertices at $\pm \infty$ with the knot. These two octahedra collapse to triangular faces. There exist at least two other octahedra that share the edge types that are collapsed.  Hence those two octahedra become pairs of tetrahedra, which each have a volume of at most $2v_{0}$.
\end{proof}

Note that this bound is stronger than the bound one would obtain by resolving each triple crossing into three double crossings and then placing an octahedron at each of those crossings.

\end{document}